\documentclass[12pt]{amsart}
\usepackage{wrapfig}
\usepackage{graphicx}
\usepackage[all]{xy}
\setlength{\oddsidemargin}{-0.25in} \setlength{\evensidemargin}{-0.25in} 
\setlength{\topmargin}{-.25in}
\setlength{\headheight}{0in} \setlength{\headsep}{.25in}
\setlength{\topskip}{0in} \setlength{\textwidth}{7in}
\setlength{\textheight}{9.25in}
\usepackage{amsmath,amsthm,amssymb}
\usepackage{mathabx,epsfig}

\usepackage[margin=1in]{geometry}
\usepackage{amsmath,amscd}
\usepackage{mathtools}

\usepackage[all]{xy}

\usepackage{thmtools}
\usepackage{thm-restate}
\usepackage{hyperref}
\usepackage{cleveref}

\newtheorem{theorem}{Theorem}[section]
\newtheorem{conjecture}[theorem]{Conjecture}
\newtheorem{corollary}[theorem]{Corollary}
\newtheorem{lemma}[theorem]{Lemma}
\usepackage[small,compact]{titlesec}
\usepackage{enumerate}
\usepackage{enumitem}
\usepackage{times}
\makeatletter

\newcommand{\Rmnum}[1]{\expandafter\@slowromancap\romannumeral #1@}
\makeatother

 \newcommand{\GL}{\operatorname{GL}}
  \newcommand{\Supp}{\operatorname{Supp}}

 \newcommand{\s}{\operatorname{S}}
 \newcommand{\Spec}{\operatorname{Spec}}
 \newcommand{\g}{\operatorname{\gamma}}
 \newcommand{\N}{\operatorname{N}}
 \newcommand{\Map}{\operatorname{}}
 \newcommand{\PrePer}{\operatorname{PrePer}}
 \newcommand{\Per}{\operatorname{Per}}
 
\newcommand*{\mysqrt}[4]{\sqrt[\leftroot{#1}\uproot{#2}#3]{#4}}
 \newcommand{\genus}{\operatorname{genus}}
 
 \newcommand{\Nat}{\operatorname{Nat}}
 \newcommand{\Dir}{\operatorname{Dir}}
\newcommand{\widesim}[2][1.5]{\mathrel{\overset{#2}{\scalebox{#1}[1]{$\sim$}}}}
 
\newcommand{\Mod}[1]{\ (\textup{mod}\ #1)}

  \newcommand{\Res}{\operatorname{Res}}
 
\newcommand{\Avg}{\operatorname{Avg}}

 \newcommand{\Aut}{\operatorname{Aut}}

\newcommand{\Gal}{\operatorname{Gal}}

\newcommand{\disc}{\operatorname{disc}}
\usepackage{amssymb,fge}
\newcommand{\mysetminus}{\mathbin{\fgebackslash}}

\usepackage{xcolor}
\usepackage{titlesec}
\titleformat{\section}[block]{\color{black}\large\filcenter}{}{1em}{}
\titleformat{\subsection}[hang]{\bfseries}{}{1em}{}
\setcounter{secnumdepth}{1}

\theoremstyle{remark}
\newtheorem{definition}{Definition} 
\newtheorem{remark}[theorem]{Remark}
\begin{document}
\title[Zsigmondy sets, Galois groups, and the Kodaira-Spencer map]{Average Zsigmondy sets, dynamical Galois groups,\\ and the Kodaira-Spencer map}
\author{Wade Hindes}
\date{\today}
\maketitle
\renewcommand{\thefootnote}{}
\footnote{2010 \emph{Mathematics Subject Classification}: Primary: 37P15. Secondary: 37P45, 11B37, 14G05, 37P55, 11G99.}
\footnote{\emph{Key words and phrases}: Arithmetic Dynamics, Rational Points on Curves, Galois Theory.}
\begin{abstract} Let $K$ be a global function field and let $\phi\in K[x]$. For all wandering basepoints $b\in K$, we show that there is a bound on the size of the elements of the dynamical Zsigmondy set $\mathcal{Z}(\phi,b)$ that depends only on $\phi$, the poles of the $b$, and $K$. Moreover, when we order $b\in\mathcal{O}_{K,S}$ by height, we show that $\mathcal{Z}(\phi,b)$ is empty on average. As an application, we prove that the inverse limit of the Galois groups of iterates of $\phi(x)=x^d+f$ is a finite index subgroup of an iterated wreath product of cyclic groups. Finally, we establish similar results on Zsigmondy sets when $K$ is the field of rational numbers or $K$ is a quadratic imaginary field subject to an added stipulation: either zero has finite orbit under iteration of $\phi$ or the Vojta conjecture for algebraic points on curves holds.          
\end{abstract}
\begin{section}{1. Introduction}
Given a rational map $\phi\in K(x)$ over a global field $K$ and a basepoint $b\in\mathbb{P}^1(K)$, we study the prime factors of $\phi^n(b)$ as we iterate $\phi$. Specifically, we are interested in knowing whether or not $\phi^n(b)$ has a primitive prime factor, that is, whether or not there is a prime dividing $\phi^n(b)$ that does not divide any lower order iterates. This problem is analogous to a classical problem of Bang \cite{Bang}, Zsigmondy \cite{Zsig}, and Schinzel \cite{Schinzel} on the prime factorization of integer sequences defined dynamically on the multiplicative group.
 
Our motivation for studying primitive prime divisors comes from the Galois theory of iterates. For instance, in the family $\phi_f(x)=x^d+f$, the existence of $d$-power free primitive prime divisors in the orbit of zero implies a dynamical version of Serre's open image theorem (\cite[Theorem 25]{xdc}, \cite[Theorem 3.3]{RJones} and \cite{Serre-reps}), and we prove this over global function fields.
 
To begin, we fix some notation. Let $K$ be a global field and let $V_K$ be a complete set of valuations on $K$ (corresponding to prime ideals). We say that $v\in V_K$ is a \emph{primitive prime divisor} of $\phi^n(b)$ if
\[v(\phi^n(b))>0\;\;\text{and}\;\; v(\phi^m(b))=0\; \text{for all}\; 1\leq m\leq n-1\]
such that $\phi^{m}(b)\neq0$. Likewise, we define the \emph{Zsigmondy set} of $\phi$ and $b$ to be
\[\mathcal{Z}(\phi,b):=\{n\;|\;\phi^n(b)\;\text{has no primitive prime divisors}\}.\]
 
Over number fields, there are many results regarding the finiteness and size of $\mathcal{Z}(\phi,b)$ in special families; see for example \cite{ABCimplies,Ing-Silv,ellipticdivis,Krieger,Silv-Vojta}. However, it remains difficult to bound $\mathcal{Z}(\phi,b)$ in general. Nevertheless, over function fields $K/\mathbb{F}_q(t)$, we show that there is a bound on $\#\mathcal{Z}(\phi,b)$ depending only on $\phi$, the poles of $b$, and on $K$; see Theorem \ref{thm:Avg}.
 
The main technique we use to prove this result is to associate to every element of $\mathcal{Z}(\phi,b)$ a rational points on some curve, and then use height bounds for points on curves to bound the size of the corresponding element of the Zsigmondy set. As one may expect, there are certain complications that arise in characteristic $p>0$ if the associated curve is defined over the field of constants. Therefore, we need the following geometric condition:
\begin{definition}{\label{def:isotriv}} Let $K/\mathbb{F}_q(t)$ and $\ell\geq2$ be an integer coprime to the characteristic of $K$. Then we say that $\phi$ is dynamically $\ell$-power non-isotrivial if there exists an integer $m\geq1$ such that
\begin{equation}{\label{dynamicalcurve}} C_{\ell,m}(\phi): Y^\ell=\phi^m(X)=(\underbracket{\phi\circ\phi\circ\dots\circ\phi}_m)(X)
\end{equation}
is a non-isotrivial curve (meaning that the associated Kodaira-Spencer map is non-zero on some open set \cite{htineq}) of genus at least $2$. As a motivating example, we show that $\phi_f(x)=x^d+f$ is dynamically $2$-power and $d$-power non-isotrivial over $K=\mathbb{F}_p(t)$; see Theorem \ref{thm:unicrit} below. 
\end{definition}
In particular, if $S\subseteq V_K$ is any finite subset, $\mathcal{O}_{K,S}$ is the ring of $S$-integers of $K$, and $\phi$ is dynamically $\ell$-power non-isotrivial for some $\ell\geq2$, then we show that
\begin{equation}
\mathcal{Z}(\phi,S):=\big\{n\,|\,n\in\mathcal{Z}(\phi,b)\;\text{for some}\;b\in\mathcal{O}_{K,S},\; \hat{h}_{\phi}(b)>0\big\}
\end{equation}
is finite; in particular, $\#\mathcal{Z}(\phi,b)$ is uniformly bounded over all $b\in\mathcal{O}_{K,S}$. Moreover, we prove an analogous result when $K=\mathbb{Q}$ or $K$ is an imaginary quadratic field, assuming the Vojta conjecture.
 
On the other hand, one expects that $\mathcal{Z}(\phi,b)$ is empty if we choose the basepoint $b\in K$ ``at random". A reasonable interpretation of this statements can be formulated in terms of averages. To wit, for all $B\geq0$, let $\mathcal{O}_{K,S}(B)$ be the set of points of $\mathcal{O}_{K,S}$ of height at most $B$, a finite set by \cite[Theorem 3.7]{Silv-Dyn}. Then we study the average
\begin{equation} \widebar{\Avg}(\mathcal{Z}(\phi),S):=\limsup_{B\rightarrow\infty}\frac{\sum_{\,b\in\mathcal{O}_{K,S}(B)}\,\#\mathcal{Z}(\phi,b)}{\#\mathcal{O}_{K,S}(B)}.
\end{equation}
as we vary over all $b\in\mathcal{O}_{K,S}$. In particular, we show that $\widebar{\Avg}(\mathcal{Z}(\phi),S)=0$ for all $S$; in other words, the naive heuristic is correct: one expects to see primitive prime divisors at every stage of iteration. Both the uniform bound and average-result are summarized below (in what follows, $\Per(\phi)$ and $\PrePer(\phi)$ denote the set of periodic and preperiodic points of $\phi$ respectively): 
\begin{restatable}{thm}{Average}{\label{thm:Avg}} Let $\phi\in K[x]$ be such that $\deg(\phi)\geq2$ and $0\not\in\Per(\phi)$.
\begin{enumerate}[topsep=2.2mm,itemsep=2.5mm]
\item[\textup{(1)}] When $K=\mathbb{Q}$ or $K$ is a quadratic imaginary field, we have the following cases:
\begin{enumerate}[topsep=2.5mm,itemsep=2.5mm]
\item[\textup{(a)}] If $0\in\PrePer(\phi)$, then $\mathcal{Z}(\phi,S)$ is finite and $\widebar{\Avg}(\mathcal{Z}(\phi),S)=0$.
\item[\textup{(b)}] If $0\not\in\PrePer(\phi)$ and the Vojta conjecture \cite[Conj. 25.1]{Vojta} holds, then $\mathcal{Z}(\phi,S)$ is finite \\and $\widebar{\Avg}(\mathcal{Z}(\phi),S)=0$.  
\end{enumerate} 
\item[\textup{(2)}] If $K/\mathbb{F}_q(t)$ and $\phi$ is dynamically $\ell$-power non-isotrivial for some $\ell\geq2$, then $\mathcal{Z}(\phi,S)$\\ is finite and $\widebar{\Avg}(\mathcal{Z}(\phi),S)=0$.     
\end{enumerate}
In other words, there is a bound on the elements of $\mathcal{Z}(\phi,b)$ depending only on $\phi$, the poles of $b$, and $K$. Moreover, if we order $\mathcal{O}_{K,S}$ by height, then $\mathcal{Z}(\phi,b)$ is empty on average.    
\end{restatable}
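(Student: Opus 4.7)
My strategy follows the method sketched in the introduction: to each $n \in \mathcal{Z}(\phi,b)$ I will attach a $K$-rational point on a twist of the curve $C_{\ell,n}(\phi): Y^\ell = \phi^n(X)$ for a suitable $\ell \geq 2$ coprime to $\mathrm{char}(K)$ (with $\ell=2$ sufficing in case (1), and $\ell$ as in the hypothesis of case (2)). Finiteness of $\mathcal{Z}(\phi,S)$ will follow from the fact that for $n$ large these curves have genus $\geq 2$, together with the appropriate finiteness-of-rational-points theorem.

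The construction of the point relies on the standard Zsigmondy observation that if $n \in \mathcal{Z}(\phi,b)$ and $v(\phi^n(b)) > 0$, then $v$ divides $\phi^m(b)$ for some $m < n$, and iterating $\phi$ modulo $v$ from the congruence $\phi^m(b) \equiv 0 \pmod v$ gives $\phi^n(b) \equiv \phi^{n-m}(0) \pmod v$; hence every prime in the support of $\phi^n(b)$ lies either in $S$ or divides $\phi^j(0)$ for some $1 \leq j \leq n-1$. One may then write $\phi^n(b) = D \cdot y^\ell$ with $D$ the $\ell$-th-power-free part. The crucial point is to show that $D$ ranges over only finitely many classes in $K^\times/(K^\times)^\ell$: in case (1a) this is immediate since $\PrePer$-ness of $0$ bounds the supporting primes in a fixed finite set; in cases (1b) and (2) this is where Vojta's conjecture (respectively the non-isotriviality hypothesis and the function-field analogue of Vojta proved via Kodaira--Spencer) enters, bounding the pro-$\ell$-content of $\phi^j(0)$ sufficiently to control $D$. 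Thus each $n \in \mathcal{Z}(\phi,b)$ produces a $K$-rational point $(b,y)$ on one of finitely many genus-$\geq 2$ twists $DY^\ell = \phi^n(X)$ of $C_{\ell,n}(\phi)$.

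To conclude finiteness of $\mathcal{Z}(\phi,S)$, I apply Faltings' theorem over $\mathbb{Q}$ or imaginary quadratic $K$, and the function-field Mordell theorem (Grauert--Manin--Samuel--Voloch) for non-isotrivial curves of genus $\geq 2$ over $K/\mathbb{F}_q(t)$; the latter is exactly why the $\ell$-power non-isotriviality hypothesis of Definition~\ref{def:isotriv} is required. A Northcott-type bookkeeping then yields a bound on $n$ depending only on $\phi$, $K$, and the poles of $b$ (which control the auxiliary set $S$). For the averaging statement, let $N$ be the uniform bound on $\#\mathcal{Z}(\phi,b)$ just produced. For each fixed $n \leq N$, the set of $b \in \mathcal{O}_{K,S}$ with $n \in \mathcal{Z}(\phi,b)$ injects into the union of $K$-rational points of finitely many genus-$\geq 2$ curves, hence is finite by the same Faltings/Mordell input. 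Summing over $n \leq N$, the numerator in the definition of $\widebar{\Avg}(\mathcal{Z}(\phi),S)$ is bounded independently of $B$, while $\#\mathcal{O}_{K,S}(B) \to \infty$, forcing the average to vanish.

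The main obstacle is the construction step when $0 \notin \PrePer(\phi)$: without $\PrePer$-ness one has no a priori bound on the primes supporting the $\phi^j(0)$, so cutting $D$ down to finitely many classes modulo $\ell$-th powers requires deeper input. In the number field case this is exactly why Vojta's conjecture appears; in the function field case its analogue is accessible unconditionally via the Kodaira--Spencer argument of \cite{htineq}, provided the associated dynamical curve does not descend to the constant field --- which is precisely the content of the $\ell$-power non-isotriviality hypothesis. Verifying that $\phi_f(x) = x^d + f$ satisfies this hypothesis (the motivating Theorem on unicritical maps) is then an unavoidable separate computation of the Kodaira--Spencer map on $C_{\ell,m}(\phi_f)$.
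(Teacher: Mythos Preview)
Your broad strategy matches the paper's, but there are two genuine gaps.

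\textbf{Wrong curve, and a misreading of Vojta's role.} You place $(b,y)$ on a twist of $C_{\ell,n}: Y^\ell=\phi^n(X)$ with $n$ varying; Faltings (or function-field Mordell) applied curve by curve gives no bound on $n$, so no finiteness of $\mathcal{Z}(\phi,S)$ follows, and your ``Northcott-type bookkeeping'' has nothing to grip. The paper instead fixes $m$ once (so that $C_{\ell,m}$ has genus $\geq 2$ and, in case (2), is non-isotrivial) and places the degree-$\leq\ell$ algebraic point $P_n(b)=\big(\phi^{n-m}(b),\,y_n\sqrt[\ell]{u_n d_n}\big)$ on this \emph{single} curve; a height bound on the $x$-coordinate $\phi^{n-m}(b)$ then bounds $n$ via $\hat{h}_\phi(\phi^{n-m}(b))=d^{n-m}\hat{h}_\phi(b)$ and positivity of $\hat{h}_{\phi,K}^{\min}$. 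Moreover, in cases (1b) and (2) the $\ell$-free part $d_n$ does \emph{not} lie in finitely many classes modulo $\ell$-th powers: its support contains primes dividing $\phi^t(0)$ and $\phi^t(b)$ for $t\leq\lfloor n/2\rfloor$, so $h(d_n)$ genuinely grows like $d^{\lfloor n/2\rfloor}$. Vojta is not used to pin down $d_n$; it is invoked as the height inequality $h_{\kappa(C)}(P_n(b))\leq A_1\,\mathfrak{d}(P_n(b))+A_2$ for algebraic points on the fixed curve, with $\mathfrak{d}(P_n(b))\lesssim h(d_n)$, yielding $h(\phi^{n-m}(b))\lesssim d^{\lfloor n/2\rfloor}$ against the growth $d^{n-m}$. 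Only in case (1a), where preperiodicity of $0$ makes the support of $d_n$ genuinely finite, does your ``finitely many twists plus Faltings'' picture work---and even there the $x$-coordinate must be $\phi^{n-m}(b)$, not $b$.

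\textbf{The average argument fails for small $n$.} The sets $T_{\phi,n,S}=\{b\in\mathcal{O}_{K,S}:n\in\mathcal{Z}(\phi,b)\}$ need not be finite: for small $n$ your curves $DY^\ell=\phi^n(X)$ can have genus $\leq 1$ (e.g.\ $n\leq 2$ when $d=\ell=2$), and the paper explicitly allows $T_{\phi,n,S}$ to be infinite. The paper instead shows each $T_{\phi,n,S}$ has natural density zero: since $T_{\phi,n,S}\subseteq\{b:\Supp(\phi^n(b))\subseteq\mathcal{P}\}$ for a fixed finite prime set $\mathcal{P}$, a Chebotarev-plus-Euler-product sieve (Lemma~\ref{lemma:density}) gives $\widebar{\delta}_{\Nat,S}(T_{\phi,n,S})=0$, and summing over the finitely many $n\in\mathcal{Z}(\phi,S)$ yields $\widebar{\Avg}(\mathcal{Z}(\phi),S)=0$.
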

As an application, we use Theorem \ref{thm:Avg} to study dynamical Galois groups. For $n\geq1$, let $K_n(\phi)$ be the field obtained by adjoining all solutions of $\phi^n(x)=0$ to $K$. Generically, the extension $K_n(\phi)/K$ is Galois, and we let $G_{K,n}(\phi):=\Gal_K(\phi^n)$ be the Galois group of $K_n(\phi)/K$. Since $K_{n-1}(\phi)\subseteq K_{n}(\phi)$ for all $n\geq1$ (under some mild separability assumptions), we may define  
\begin{equation}{\label{Galois}} G_K(\phi)=\lim_{\longleftarrow}G_{K,n}(\phi)
\end{equation}
with respect to the restriction maps. Dynamical analogs on $\mathbb{P}^1$ of the Galois representations attached to abelian varieties \cite{Serre-reps} (where one instead considers iterated preimages of multiplication maps), the groups $G_K(\phi)$ have obtained much attention in recent years; for instance, see \cite{BJ,GaloisUniform, RJSurvey, Odoni,Pink,Stoll-Galois}, among other places. 
 
Of course, a key difference in this setting is the lack of group structure on projective space, and as such $G_K(\phi)$ may often only be viewed as a subgroup of a wreath product (or the automorphism group of a tree) and not inside a group of matrices. Explicitly, let $T(d)$ denote the infinite $d$-ary rooted tree. If we write $\phi^n=f_n/g_n$ for some $f_n,g_n\in K[x]$ such that $\disc({f_n})\neq0$ for all $n\geq1$, then we may identify $T(d)$ with the set of iterated preimages of zero (under $\phi$) in $\widebar{K}$ with the edge relation given by evaluation; see \cite{BJ} for more details. In particular, $G_K(\phi)\leq\Aut(T(d))$, since Galois commutes with evaluation. 
 
As in the case of abelian varieties, one expects that $G_K(\phi)$ is a large subgroup of $\Aut(T(d))$. However, given the unruly nature of $\Aut(T(d))$ and its subgroups, we focus our attention on polynomials of a special form. To do this, fix a faithful permutation representation of the cyclic group $C_d\leq \s_d$ and let $W(d)$ be the infinite iterated wreath product  of $C_d$ acting on $T(d)$; see \cite{wreath} or \cite[Defs. 2.3 and 2.4]{Juul}. In particular, if $\mu_d$ is the group of $d$-th roots of unity in $\widebar{K}$ and $\phi_f(x)=x^d+f$ for some $f\in K$, then we have the refinement $G_{K(\mu_d)}(\phi_f)\leq W(d)\leq\Aut(T(d))$; see \cite[Lemma 2.5]{Juul}. Moreover, it follows from Theorem \ref{thm:Avg} and some calculations involving the Kodaira-Spencer map of $C_{m,\ell}(\phi)$, that $G_{K(\mu_d)}(\phi_f)\leq W(d)$ is a finite index subgroup:       
 
\begin{restatable}{thm}{unicrit}{\label{thm:unicrit}} Let $K=\mathbb{F}_p(t)$, let $f\in K$ and let let $d\geq2$ be coprime to $p$. If $f\not\in K^p$ and $\phi(x)=x^d+f$, then the following statements hold:
\begin{enumerate}[topsep=2.25mm,itemsep=2.5mm]
\item[\textup{(1)}] $\mathcal{Z}(\phi,S)$ is finite and $\widebar{\Avg}(\mathcal{Z}(\phi),S)=0$\, for all finite subsets $S\subseteq V_K$.
\item[\textup{(2)}] If $d$ is an odd prime, $d\not\equiv1\Mod{p}$, $f\in\mathcal{O}_K$ and $f\not\in {K(\mu_d)}^d$, then $G_{K(\mu_d)}(\phi)\leq W(d)$ is a finite index subgroup.
\end{enumerate}    
\end{restatable}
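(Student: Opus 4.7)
The plan is to derive (1) by applying Theorem~\ref{thm:Avg}(2) after checking that $\phi(x)=x^d+f$ is dynamically $\ell$-power non-isotrivial for some integer $\ell\geq 2$ coprime to $p$; and to derive (2) from (1) applied to the wandering basepoint $b=0$, combined with the standard Kummer-theoretic criterion for detecting finite index in $W(d)$ via primitive prime divisors in the orbit of $0$. Note first that $0$ is wandering for $\phi$: the hypothesis $f\notin K^p=\mathbb{F}_p(t^p)$ forces $f$ to be non-constant in $\mathbb{F}_p(t)$, hence $h(f)>0$ and the orbit heights satisfy $h(\phi^n(0))\asymp d^n h(f)\to\infty$, so $0\notin\Per(\phi)$ and Theorem~\ref{thm:Avg} applies.

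For part (1), I would set $\ell=2$ when $p$ is odd and $\ell=d$ when $p=2$ (allowed since $\gcd(d,p)=1$ forces $d$ odd in the latter case), and verify the two conditions in Definition~\ref{def:isotriv} for $C_{\ell,m}(\phi):Y^\ell=\phi^m(X)$. The genus condition follows from Riemann--Hurwitz: since $\phi^m(X)$ is separable with $d^m$ simple roots (as $0$ is wandering), the cover $C_{\ell,m}(\phi)\to\mathbb{P}^1_X$ has genus $(\ell-1)(d^m-1)/2$ or $(\ell-1)(d^m-2)/2$ (depending on whether $\ell\mid d^m$), which exceeds $2$ for $m\gg 0$. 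The substantive step is the Kodaira--Spencer computation: since $f\notin K^p$ is equivalent to $df/dt\neq 0$, and since the chain rule yields $\partial\phi^m/\partial f\in K[X]\setminus\{0\}$ for every $m\geq 1$, we have
\[
\frac{\partial}{\partial t}\bigl(\phi^m(X)\bigr)\;=\;\frac{\partial\phi^m}{\partial f}(X)\cdot\frac{df}{dt}\;\neq\;0
\]
as a polynomial in $X$. I would then feed this non-vanishing into the standard cohomological description of the Kodaira--Spencer class for a superelliptic family $Y^\ell=g(X,t)$ (whose class in $H^1(C_{\ell,m},T_{C_{\ell,m}/K})$ is obtained by differentiating the defining equation with respect to $t$), verifying that no coboundary kills it. This gives $C_{\ell,m}(\phi)$ non-isotrivial and hence $\phi$ dynamically $\ell$-power non-isotrivial, so Theorem~\ref{thm:Avg}(2) yields (1).

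For part (2), applying (1) with $b=0$ yields $\mathcal{Z}(\phi,0)$ finite, so for all but finitely many $n$ there is a place $v\in V_K$ with $v(\phi^n(0))>0$ and $v(\phi^m(0))=0$ for $1\leq m<n$. I would then refine ``primitive'' to ``$d$-power-free primitive'' (i.e.\ $v(\phi^n(0))\not\equiv 0\pmod d$) using the hypotheses $f\in\mathcal{O}_K$ to bound valuations at good primes, together with $d\not\equiv 1\pmod p$ and $f\notin K(\mu_d)^d$ to rule out local and global Kummer degeneracies. Finally, I apply the standard criterion for unicritical polynomials (cf.\ \cite{Juul},~\cite{RJones}): over $K(\mu_d)$ the extension $K_n(\phi)/K_{n-1}(\phi)$ decomposes as the composite of Kummer extensions $K_{n-1}(\phi)(\sqrt[d]{\alpha-f})$ for $\alpha\in\phi^{-(n-1)}(0)$, and a $d$-power-free primitive divisor of $\phi^n(0)=\pm\prod_\alpha(\alpha-f)$ forces the index $[W_n(d):G_{K(\mu_d),n}(\phi)]$ to stabilize, giving the finite-index claim. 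The main obstacle is the Kodaira--Spencer non-vanishing in (1): while the polynomial-level identity $\partial\phi^m/\partial t\neq 0$ is essentially tautologous given $f\notin K^p$, promoting this to non-vanishing of a class in $H^1(C_{\ell,m},T_{C_{\ell,m}/K})$ requires a careful local computation to exclude coboundaries arising from automorphisms of the superelliptic cover or from deformations supported on the critical values of $\phi^m$.
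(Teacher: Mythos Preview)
Your overall architecture for both parts is correct, but the Kodaira--Spencer step in part~(1) is left as a gap that is more serious than you indicate. Your heuristic ``$\partial\phi^m/\partial t\neq 0$ as a polynomial in $X$, hence the KS class is nonzero'' is \emph{false} as stated: as the paper notes (Remark~\ref{eg:isotriv}), the curve $C_{\ell,1}(\phi):Y^\ell=X^d+f$ is isotrivial for every $\ell\geq 2$ (the change of variables $(X,Y)\mapsto(X/\sqrt[d]{f},Y/\sqrt[\ell]{f})$ carries it to $Y^\ell=X^d+1$), even though $\partial\phi/\partial t=f'\neq 0$. So a coboundary \emph{does} kill the class at $m=1$, and your proposed ``careful local computation'' cannot be a formality---it must genuinely exploit the second iterate.

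The paper's approach is to fix $m=2$ and compute one entry of the KS matrix explicitly. Using the Serre-duality formula of \cite[\S5.2]{Voloch} for superelliptic curves, the entries are residue sums $m_{i,j}=\sum_s P_s^{i+j}\phi_t^2(P_s)/(2\phi_x^2(P_s)^2)$ over the roots $P_s$ of $\phi^2$. Writing $\phi(P_s)=\zeta_s\alpha_f$ with $\alpha_f=\sqrt[d]{-f}$ and $\zeta_s\in\mu_d$, the paper chooses the entry $(i,j)=(d-2,0)$, expands via the identity $x^d-y^d=(x-y)(x^{d-1}+\cdots+y^{d-1})$, and uses that the map $P_s\mapsto\zeta_s$ is $d$-to-$1$ onto $\mu_d$ to kill all nonconstant cyclotomic sums. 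The upshot is the closed formula
\[
m_{d-2,0}=\frac{(1-d)\,f'}{2d^4(f^d+f)},
\]
which is nonzero since $f\notin K^p$. (For even $d\geq 4$ the paper runs the same computation on $C_{d,2}$ using the differentials $x^{d-2}dx/y^{d-1}$ and $dx/y$.) This explicit cyclotomic-sum calculation is the real content of part~(1); your proposal does not supply anything comparable.

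For part~(2), two points. First, you omit irreducibility of the iterates $\phi^n$ over $K(\mu_d)$, which is needed before invoking the Kummer criterion; the paper handles this by the factorization $\phi^n(0)=f\cdot g_n$ with $\gcd(f,g_n)=1$ in the UFD $\mathcal{O}_{K(\mu_d)}$, so $\phi^n(0)\in K(\mu_d)^d$ would force $f\in K(\mu_d)^d$. Second, you propose to ``refine'' primitive to $d$-power-free primitive as a separate step, but in fact this comes for free from the \emph{proof} of Theorem~\ref{thm:Avg}: that argument bounds the $\ell$-free part $d_n$ directly (Lemma~\ref{lemma:decomp} and \eqref{htestimate}), so taking $\ell=d$ there already yields $d$-power-free primitive primes in $\mathcal{O}_\phi(0)$ for all large $n$. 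One then concludes via \cite[Theorem~25]{xdc}.
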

In fact, it follows from Theorem \ref{thm:unicrit} that $G_{K(\mu_d)}(\phi_f)\leq W(d)$ is a finite index subgroup for all non-constant $f\in\mathbb{F}_p(t)$ and $d\not\equiv1\Mod{p}$; see Remark \ref{rem:eventuallystable} below. In particular, the number of irreducible factors of $\phi^n$ over $\mathbb{F}_p(t)$ is bounded independently of $n$ (a phenomenon called eventual stability \cite[\S5]{RJSurvey}), and we recover a stronger version of \cite[Corollary 7]{xdc}. For applications of eventual stability to the study of integral points in reverse orbits, see \cite[\S3]{Alon-Rafe} and \cite[Theorem 2.6]{Sookdeo}. As for characteristic zero function fields $K/k(t)$, a finite index statement for $G_{K(\mu_d)}(x^d+f)\leq W(d)$ follows from \cite{ABCimplies} and \cite[Theorem 25]{xdc}. However, we can improve upon this result, making the index bounds explicit and uniform when $K=k(t)$ is a rational function field.    
\begin{restatable}{thm}{uniform}{\label{thm:uniform}}
Let $K=k(t)$ be a rational function field of characteristic zero, let $d$ be an odd prime, and let $\phi(x)=x^d+f$ for some non-constant $f\in k[t]$. Then the following statements hold:
\begin{enumerate}[topsep=2.3mm,itemsep=3mm]
\item[\textup{(1)}] If $f\notin K(\mu_d)^d$, then we have the index bound
\[ \log_d\, [W(d): G_{K(\mu_d)}]\leq\frac{d^{10}-1}{d-1}+10.\]  
\item[\textup{(2)}] If $\Gal_{K(\mu_d)}(\phi^{10})\cong [C_d]^{10}$, then $G_{K(\mu_d)}(\phi)\cong W(d)$.
\item[\textup{(3)}] If $d\geq367$ and $\Gal_{K(\mu_d)}(\phi^{5})\cong [C_d]^{5}$, then $G_{K(\mu_d)}(\phi)\cong W(d)$
\end{enumerate}
Moreover, if $\phi(x)=x^d+t$, then $G_{K(\mu_d)}(\phi)\cong W(d)$ for all $d\geq2$ (not necessarily prime). In particular, if $k$ is a number field and $\phi_c(x)=x^d+c$\, for some $c\in k$, then there are infinitely many values of $c$ satisfying $\Gal_{k(\mu_d)}(\phi_c^n)\cong [C_d]^n$.     
\end{restatable}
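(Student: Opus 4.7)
\textbf{Plan for the proof of Theorem \ref{thm:uniform}.}

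\emph{Paragraph 1 (criterion for maximal Galois image).} My starting point is the standard tower criterion for iterated wreath products: writing $G_n=\Gal_{K(\mu_d)}(\phi^n)$, one has $G_n\cong [C_d]^n$ if and only if, for every $1\leq m\leq n$, the element $\phi^m(0)$ fails to be a $d$-th power in the extension $K_{m-1}(\phi)(\mu_d)$ (in the strong sense that adjoining a $d$-th root of $-\phi^{m-1}(\beta)$ to every root $\beta$ of $\phi^{m-1}(x)=0$ produces a Kummer extension of the maximal possible degree $d^{d^{m-1}}$). This reduces the theorem to the arithmetic question of when the critical orbit $\{\phi^m(0)\}_{m\geq 1}$ carries enough non-$d$-th-power information.

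\emph{Paragraph 2 (Zsigmondy bound controls all but finitely many levels).} Next I would extract effective constants from the proof of Theorem \ref{thm:Avg}, applied to the dynamical curves $C_{d,m}(\phi):Y^d=\phi^m(X)$ of Definition \ref{def:isotriv}, whose genera grow like $\tfrac{(d-1)d^{m-1}}{2}$ by Riemann--Hurwitz. Combined with Mason--Stothers for $k(t)$ and a bound on the pole divisor of $f$, this yields an explicit upper bound on every element of $\mathcal{Z}(\phi,0)$; the targets are $\mathcal{Z}(\phi,0)\subseteq\{1,\ldots,10\}$ for an arbitrary odd prime $d$ and $\mathcal{Z}(\phi,0)\subseteq\{1,\ldots,5\}$ once $d\geq 367$. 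The threshold $d\geq 367$ is the smallest for which the function-field genus inequality already wins at level $m=5$. A second input I need is the ramification/lifting step: if $v$ is a primitive prime divisor of $\phi^n(0)$, then $v(\phi^n(0))$ is coprime to $d$ and $v$ is at worst tamely ramified with index prime to $d$ in $K_{n-1}(\phi)(\mu_d)/K(\mu_d)$. This is what converts a primitive prime over the base field $K$ into a genuine non-$d$-th-power witness inside the iterated tower needed by the criterion of Paragraph 1.

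\emph{Paragraph 3 (deducing parts (1), (2), (3)).} Given the Zsigmondy bounds above, parts (2) and (3) are immediate: once $G_{10}$ (resp.\ $G_{5}$ with $d\geq 367$) is already full, every subsequent $\phi^n(0)$ carries a primitive prime divisor of valuation coprime to $d$, and the criterion forces $G_n/G_{n-1}\cong[C_d]^{d^{n-1}}$ at every stage, so $G_{K(\mu_d)}(\phi)\cong W(d)$. For part (1), the worst case is that $G_{10}$ sits as small as possible inside $[C_d]^{10}$, contributing $\log_d|[C_d]^{10}|=\tfrac{d^{10}-1}{d-1}$ to the index; the additive $+10$ absorbs the finitely many transitional levels where a primitive prime exists but the associated Kummer step is only forced to grow by a single factor of $d$ rather than the full $d^{d^{n-1}}$, and these levels are bounded by the size of $\mathcal{Z}(\phi,0)\cup\{1,\ldots,10\}$.

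\emph{Paragraph 4 ($\phi(x)=x^d+t$ and Hilbert irreducibility).} For the special polynomial $\phi(x)=x^d+t$, I would bypass the bounds above entirely and argue by induction on $n$: a direct Newton-polygon computation at the place $t=\infty$ shows that $\phi^n(0)\in k[t]$ has degree $d^{n-1}$ with a prescribed leading coefficient and hence has valuation at $\infty$ not divisible by $d$; this automatically witnesses the non-$d$-th-power condition in Paragraph 1 at every level, giving $G_{K(\mu_d)}(\phi)\cong W(d)$ for all $d\geq 2$. For the final specialization statement, I would apply Hilbert's Irreducibility Theorem to the Galois extension of $k(t)(\mu_d)$ cut out by $\phi^n$: outside a thin subset of $\mathbb{A}^1(k)$, specializing $t\mapsto c$ preserves the Galois group, producing infinitely many $c\in k$ with $\Gal_{k(\mu_d)}(\phi_c^n)\cong[C_d]^n$.

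\emph{Main obstacle.} The crux is making the Mason--Stothers and Riemann--Hurwitz estimates on $C_{d,m}(\phi)$ effective enough to pin down the precise numerical thresholds $10$ and $367$, and carefully controlling the interaction between the genus bound, the pole divisor of $f$, and the $d$-th-power non-isotriviality hypothesis of Definition \ref{def:isotriv}. Once those explicit constants are in hand, every remaining ingredient is a routine Kummer/tower computation together with Hilbert irreducibility.
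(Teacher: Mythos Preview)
Your outline shares the correct high-level architecture with the paper (tower criterion, effective Zsigmondy bound on the critical orbit, Kummer step, Hilbert irreducibility), but two of your key inputs are wrong as stated, and a third choice makes the explicit constant $10$ essentially inaccessible.

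\textbf{The ``primitive $\Rightarrow$ valuation coprime to $d$'' step is false.} In Paragraph~2 you write: ``if $v$ is a primitive prime divisor of $\phi^n(0)$, then $v(\phi^n(0))$ is coprime to $d$.'' This universal claim is not true; nothing prevents a primitive prime from appearing to a multiplicity divisible by $d$. What is actually needed, and what the paper proves directly, is the \emph{existential} statement: for all $n\geq 11$ there is a place $v_n$ with $v_n(\phi^n(0))>0$, $v_n(\phi^n(0))\not\equiv 0\pmod d$, and $v_n(\phi^m(0))=0$ for $m<n$. The paper gets this by writing $\phi^n(0)=d_n y_n^d$ with $d_n$ the $d$-free part, observing that $(\phi^{n-1}(0),\,d_n y_n)$ is an integral point on the \emph{twisted Thue curve} $Y^d=d_n^{d-1}(X^d+f)$, and applying Mason's effective bound for Thue equations over function fields. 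This is quite different from your proposal of pushing the Vojta-type inequalities behind Theorem~\ref{thm:Avg} on $C_{d,m}(\phi):Y^d=\phi^m(X)$; Mason's theorem comes with the explicit constants $18,6,3$, which is exactly what produces $n\leq 2\log_d(19)+5<10.4$ and, for $d\geq 367$, $n\leq 5$. Extracting a clean threshold like $10$ from the constants implicit in Theorem~\ref{thm:Avg} would be very painful by comparison.

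\textbf{The $x^d+t$ argument via the place at $\infty$ does not work.} You propose showing $\phi^n(0)$ is not a $d$-th power by noting its valuation at $t=\infty$ is not divisible by $d$. But $\deg_t\phi^n(0)=d^{n-1}$, so $v_\infty(\phi^n(0))=-d^{n-1}$, which \emph{is} divisible by $d$ for every $n\geq 2$; the place at infinity gives you nothing. The paper instead shows that $\phi^n(0)\in k[t]$ is \emph{square-free}: reducing modulo any prime $p\mid d$, the formal $t$-derivative of $\phi^n(0)$ is identically $1$, so $\phi^n(0)$ has no repeated roots. Since $\deg\phi^n(0)=d^{n-1}>\sum_{j<n}\deg\phi^j(0)$, a new prime must appear, and square-freeness forces its valuation to be $1$, hence coprime to $d$. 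That is the correct replacement for your Paragraph~4.

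Your explanation of the $+10$ in part~(1) is also off: it does not come from ``transitional levels'' but simply from the fact that irreducibility of $\phi^{10}$ gives $[K_{10}(\phi):K(\mu_d)]\geq d^{10}$, which is then subtracted from $\log_d\lvert[C_d]^{10}\rvert=\tfrac{d^{10}-1}{d-1}$.
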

\indent \textbf{Acknowledgements:} It is a pleasure to thank Rafe Jones, Brian Conrad, Bjorn Poonen, and Felipe Voloch for the useful discussions related to the work in this paper.
\end{section}
\begin{section}{2. Primitive prime divisors and averages}
In this section, we make the following conventions:
\begin{itemize}
\item $K$ is a number field or a finite extension $K/\mathbb{F}_q(t)$.
\item if $K$ is a function field, $k$ is its field of constants of $K$.
\item $\mathfrak{p}$ is a finite prime of $K$.
\item $k_\mathfrak{p}$ is the residue field of $\mathfrak{p}$.
\item if $K$ is a number field, then $\N_\mathfrak{p}:=\frac{\log\#k_{\mathfrak{p}}}{[K:\mathbb{Q}]}$. 
\item if $K$ is a function field, then $\N_\mathfrak{p}:=[k_\mathfrak{p}:k]$.
\end{itemize} 
We normalize $\N_\mathfrak{p}$ in the number field case, since it streamlines our proofs. Moreover, given a finite set of primes $S\subseteq V_K$, we let $\mathcal{O}_K:=\{a\in K : v(a)\geq0,\; v\in V_K\}$ be the ring of integers of $K$ and $\mathcal{O}_{K,S}:=\{a\in K : v(a)\geq0,\; v\notin S\}$ be the ring of $S$-integers. Similarly, when $K$ is a function field, we fix a prime $\mathfrak{p}_0$ and set $\mathcal{O}_K:=\big\{\alpha\in K\, : \, v_{\mathfrak{p}}(\alpha)\geq0,\;\mathfrak{p}\neq\mathfrak{p}_0\big\}$ and let $\mathcal{O}_{K,S}:=\big\{\alpha\in K\, : \, v_{\mathfrak{p}}(\alpha)\geq0,\; \mathfrak{p}\notin S\big\}$ for all $S$ containing $\mathfrak{p}_0$. Moreover, we let $\mathcal{O}_K^*$ and $\mathcal{O}_{S,K}^*$ be the corresponding unit groups.  
 
We now define the relevant global Weil-height functions; see \cite[\S3.1]{Silv-Dyn} and \cite[Theorem 1.4.11]{ffields} for more details. If $K$ is a function field, then we define the height of $\alpha\in K$  to be 
\begin{equation}{\label{ffhtdef}} h(\alpha)=-\sum_{\mathfrak{p}\in V_K}\min(v_{\mathfrak{p}}(\alpha),0)\cdot\N_{\mathfrak{p}}=\sum_{\mathfrak{p}\in V_K}\max(v_{\mathfrak{p}}(\alpha),0)\cdot\N_{\mathfrak{p}}
\end{equation}
On the other hand, if $K$ is a number field, the height of $\alpha\in K$ is
\begin{equation}{\label{nfhtdef}}
h(\alpha)=-\sum_{\mathfrak{p}\in V_K}\min(v_{\mathfrak{p}}(\alpha),0)\cdot\N_{\mathfrak{p}}\;+\; \frac{1}{[K:\mathbb{Q}]}\sum_{\sigma:K\rightarrow\mathbb{C}}\max(\log|\sigma(\alpha)|,0).
\end{equation}
\begin{remark} The key advantage in the function field setting when studying primitive prime divisors is that one can compute heights of integers by keeping track of positive valuations only.   
\end{remark}
As was mentioned in the introduction, the main technique we use to prove Theorem \ref{thm:Avg} comes from the theory of rational points on curves. However, over function fields, we must stipulate that our curve not be defined over the field of constants, otherwise certain results (such as the Mordell conjecture) are false. The most convenient way to achieve this is to define the Kodaira-Spencer map.
 
To do this, we think of a curve $X_{/K}$ as a surface over $k$. In particular, $X$ is equipped with a map $f:X\rightarrow C$ to a curve $C_{/k}$ satisfying $K=k(C)$. The Kodaira-Spencer map (or $KS$) is constructed on any open set $U\subseteq C$ over which $f$ is smooth from the exact sequence 
\[0\rightarrow f^*\Omega_U^1\rightarrow\Omega_{X_{U}}^1\rightarrow \Omega_{X_{U}/U}^1\rightarrow 0.\]
by taking the coboundary map $KS: f_*(\Omega_{X_{U}/U})\rightarrow\Omega_U^1\times R^1f_*(\mathcal{O}_{X_{U}})$.       
\begin{proof}[(Proof Theorem \ref{thm:Avg})]
To estimate the size of elements in $\mathcal{Z}(\phi,b)$, we refine our proof of the finiteness of $\mathcal{Z}(\phi,b)$ in \cite[Theorem 1]{primdiv} and follow the conventions therein. Note that $S\subseteq S'$ implies $\mathcal{Z}(\phi,S)\subseteq\mathcal{Z}(\phi,S')$. Therefore, to prove that $\mathcal{Z}(\phi,S)$ is finite, we may enlarge $S$ and assume that
\[\text{(a)}.\;\;\;b\in\mathcal{O}_{K,S}\;\;\;\;\;\; \text{(b)}.\;\;\;\phi\in\mathcal{O}_{K,S}[x]\;\;\;\;\;\;\text{(c)}.\;\;\; v(a_d)=0\;\;\text{for all}\; v\notin S\;\;\;\;\;\; \text{(d)}.\;\;\; \mathcal{O}_{K,S}\;\text{is a UFD,}\] where $a_d$ is the leading term of $\phi$. Note that condition (d) is made possible by the finiteness of the class group; see \cite[Prop. 14.2 ]{Rosen} over function fields. Likewise, we may assume that $\mathfrak{p}_0\in S$. 
 
We first bound $\mathcal{Z}(\phi,b)$ when $0\not\in\mathcal{O}_\phi(b)$, true of all but finitely many $b\in K$ by Lemma \ref{0orbit}. To do this, we use the following decomposition of $\phi^n(b)$ into an $\ell$ and $\ell$-free part.
\begin{lemma}{\label{lemma:decomp}} Let $\phi$, $K$ and $S$ be as above and let $\ell\geq2$. Then we have a decomposition
\begin{equation}{\label{decomp}} \phi^n(b)=u_n\cdot d_n\cdot y_n^\ell,\,\;\;\;\text{for some}\;\; d_n,y_n\in\mathcal{O}_{K,S},\;u_n\in\mathcal{O}_{K,S}^*,
\end{equation}
satisfying the following properties:
\begin{enumerate}[topsep=2mm,itemsep=2mm]
\item[\textup{(1)}] $0\leq v_{\mathfrak{p}}(d_n)\leq\ell-1$ for all $\mathfrak{p}\not\in S$,
\item[\textup{(2)}] There is a constant $r(S)$ such that $0\leq v_{\mathfrak{p}}(d_n)\leq r(S)$ for all $\mathfrak{p}\in S$ when $K/\mathbb{Q}$ \\ and all $\mathfrak{p}\in S\mysetminus\{\mathfrak{p}_0\}$ when $K/\mathbb{F}_q(t)$,
\item[\textup{(3)}] The height $h(u_n)$ is bounded independently of $n$.   
\end{enumerate}  
\end{lemma}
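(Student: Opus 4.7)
The plan is to exploit unique factorization in $\mathcal{O}_{K,S}$ (condition (d) above) together with the finiteness of the $S$-unit quotient $\mathcal{O}_{K,S}^*/(\mathcal{O}_{K,S}^*)^\ell$ to distill the decomposition \eqref{decomp}. Since $\phi\in\mathcal{O}_{K,S}[x]$ and $b\in\mathcal{O}_{K,S}$, iteration keeps $\phi^n(b)\in\mathcal{O}_{K,S}$, so $v_\mathfrak{p}(\phi^n(b))\geq 0$ for every $\mathfrak{p}\notin S$.

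First, I would fix a uniformizer $\pi_\mathfrak{p}$ for each prime $\mathfrak{p}\notin S$. Unique factorization in $\mathcal{O}_{K,S}$ gives
\[
\phi^n(b)\,=\,w_n\prod_{\mathfrak{p}\notin S}\pi_\mathfrak{p}^{\,v_\mathfrak{p}(\phi^n(b))}
\]
for a unique $w_n\in\mathcal{O}_{K,S}^*$. For each such $\mathfrak{p}$, I divide with remainder: $v_\mathfrak{p}(\phi^n(b))=\ell\,q_\mathfrak{p}+r_\mathfrak{p}$ with $0\leq r_\mathfrak{p}\leq \ell-1$, and set $d_n':=\prod_{\mathfrak{p}\notin S}\pi_\mathfrak{p}^{r_\mathfrak{p}}$ and $z_n:=\prod_{\mathfrak{p}\notin S}\pi_\mathfrak{p}^{q_\mathfrak{p}}$. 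This produces the tentative identity $\phi^n(b)=w_n\,d_n'\,z_n^\ell$, in which $d_n'$ satisfies (1) by construction and meets (2) trivially (with $r(S)=0$), since $v_\mathfrak{p}(d_n')=0$ for every $\mathfrak{p}\in S$.

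The heart of the argument is (3). Here I would invoke Dirichlet's $S$-unit theorem in the number field case and its function-field analog (see for instance the Rosen reference already cited in the excerpt): in both settings $\mathcal{O}_{K,S}^*$ is a finitely generated abelian group, whose torsion sits inside the (finite) group of roots of unity in $K$, so the quotient $\mathcal{O}_{K,S}^*/(\mathcal{O}_{K,S}^*)^\ell$ is finite. I would then fix once and for all a finite set of coset representatives $R=\{u^{(1)},\dots,u^{(m)}\}\subseteq\mathcal{O}_{K,S}^*$ and write $w_n=u_n\,v_n^\ell$ with $u_n\in R$ and $v_n\in\mathcal{O}_{K,S}^*$. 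Setting $d_n:=d_n'$ and $y_n:=v_n z_n$ now gives the desired identity $\phi^n(b)=u_n\cdot d_n\cdot y_n^\ell$, and because $u_n$ lies in the fixed finite set $R$, one has $h(u_n)\leq\max_{1\leq i\leq m}h(u^{(i)})$, a bound depending only on $K$, $S$, and $\ell$, as required.

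The only potentially delicate point is the finiteness of the unit quotient in positive characteristic when $p\mid\ell$, but finite generation of $\mathcal{O}_{K,S}^*$ (with torsion contained in the finite field of constants) makes $\mathcal{O}_{K,S}^*/(\mathcal{O}_{K,S}^*)^\ell$ finite regardless of the relation between $\ell$ and $p$. I do not anticipate any genuine difficulty: the lemma is essentially packaging the canonical (unit)$\times$($\ell$-free)$\times$($\ell$-th power) decomposition into the precise form required by the rational-points-on-curves estimates used in the rest of the proof of Theorem \ref{thm:Avg}, and the flexibility built into (2) via the parameter $r(S)$ is exactly what allows one to absorb small $S$-local contributions into $d_n$ should they later prove convenient.
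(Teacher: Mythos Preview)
Your argument for (1) and (3) is essentially the paper's, but your treatment of (2) contains a genuine gap. You assert that $v_\mathfrak{q}(d_n')=0$ for every $\mathfrak{q}\in S$ because $d_n'=\prod_{\mathfrak{p}\notin S}\pi_\mathfrak{p}^{r_\mathfrak{p}}$ is built from uniformizers at primes \emph{outside} $S$. This is false in general: a prime element $\pi_\mathfrak{p}\in\mathcal{O}_{K,S}$ is only constrained at places outside $S$; its valuations at $\mathfrak{q}\in S$ can be nonzero, and there is no way to choose $\pi_\mathfrak{p}$ with $v_\mathfrak{q}(\pi_\mathfrak{p})=0$ for all finite $\mathfrak{q}\in S$ unless $\mathfrak{p}$ is already principal in $\mathcal{O}_K$. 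For instance, take $K=\mathbb{Q}(\sqrt{-5})$ and $S=\{\mathfrak{p}_2,\infty\}$ (so that $\mathcal{O}_{K,S}$ is a PID). The prime $\mathfrak{p}_3=(3,1+\sqrt{-5})$ is non-principal in $\mathcal{O}_K$, and any generator of $\mathfrak{p}_3\mathcal{O}_{K,S}$, e.g.\ $1+\sqrt{-5}$, has $v_{\mathfrak{p}_2}$ equal to $1$, not $0$. Since the number of primes $\mathfrak{p}$ contributing to $d_n'$ grows with $n$, the quantity $v_\mathfrak{q}(d_n')=\sum_{\mathfrak{p}}r_\mathfrak{p}\,v_\mathfrak{q}(\pi_\mathfrak{p})$ is \emph{not} bounded independently of $n$, so your $d_n'$ does not satisfy (2) for any choice of $r(S)$.

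The paper handles (2) by an extra normalization step that you omitted: for each finite $\mathfrak{p}_i\in S$ (excluding $\mathfrak{p}_0$ in the function-field case) one uses finiteness of the class group of $\mathcal{O}_K$ to pick $a_i\in\mathcal{O}_K$ with $(a_i)=\mathfrak{p}_i^{n_i}$ in $\mathcal{O}_K$. Each $a_i$ is then an $S$-unit whose only nonzero finite valuation is $v_{\mathfrak{p}_i}(a_i)=n_i$. Writing $v_{\mathfrak{p}_i}(d_n)=q_in_i+r_i$ with $0\le r_i<n_i$ and replacing $d_n$ by $d_n/\prod_i a_i^{q_i}$ (absorbing $\prod_i a_i^{q_i}$ into the unit) forces $0\le v_{\mathfrak{p}_i}(d_n)\le n_i-1$ without disturbing (1). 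This is precisely the content that the parameter $r(S)$ in (2) is recording; it is not ``flexibility'' but a necessary correction, and your proof needs this step (or an equivalent one) to go through.
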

\begin{proof}[(Proof of Lemma \ref{lemma:decomp})] By assumptions (a) and (b) on $S=S(\phi,b)$, we see that $\phi^n(b)\in\mathcal{O}_{K,S}$ for all $n$. Hence, for any integer $\ell\leq2$, we may write $\phi^n(b)=u_n\cdot d_n\cdot y_n^\ell$ as on (\ref{decomp})
since $\mathcal{O}_{K,S}$ is a UFD. Furthermore, we can assume that $0\leq v(d_n)\leq\ell-1$ for all $v\notin S$. To see this, we use the correspondence $V_K\mysetminus S\longleftrightarrow\Spec(\mathcal{O}_{K,S})$ discussed in \cite[Ch. 14]{Rosen} and write:
\[d_n=p_1^{e_1}\cdot p_2^{e_2}\cdots p_s^{e_s}\big(p_1^{q_1}\cdot p_2^{q_2}\cdots p_s^{q_s}\big)^{\ell},\;\;\;\;\; p_i\in\Spec(\mathcal{O}_{K,S})\]
for some integers $e_i, q_i$ satisfying $v_{p_i}(d_n)=q_i\cdot\ell+e_i$ and $0\leq e_i<\ell$. In particular, by replacing $d_n$ with $\big(p_1^{e_1}\cdot p_2^{e_2}\cdots p_s^{e_s}\big)$ and $y_n$ with $\big(y_n\cdot p_1^{q_1}\cdot p_2^{q_2}\cdots p_s^{q_s}\big)$, we may assume that $0\leq v(d_n)\leq\ell-1$ for all $v\in V_K\mysetminus S$ as claimed.
 
On the other hand, let $\mathfrak{p}_i\in S$ when $K$ is a number field and  $\mathfrak{p}_i\in S\mysetminus\{\mathfrak{p}_0\}$ when $K$ is a function field. Since the class group of $K$ is finite, there exists $a_i\in \mathcal{O}_K$ and $n_i\geq 1$ such that $\mathfrak{p}_i^{n_i}=(a_i)$. In particular, $v_{\mathfrak{p}_i}(a_i)=n_i>0$ and $v_\mathfrak{p}(a_i)=0$ for all $\mathfrak{p}\in\Spec(\mathcal{O}_K)\mysetminus\{\mathfrak{p}_i\}$. Therefore, if we write $v_{\mathfrak{p}_i}(d_n)=q_i\cdot n_i+r_i$ for some $0\leq r_i<n_i$ and set $d_n':=d_n/(\prod_{i}a_i^{q_i})$, then we have that $0\leq v_{\mathfrak{p}}(d_n')=v_{\mathfrak{p}}(d_n)\leq\ell-1$ for all $\mathfrak{p}\not\in S$ and $v_{\mathfrak{p}_i}(d_n')=r_i$ for all $\mathfrak{p}_i$. Hence, after replacing $d_n$ with $d_n'$ and $u_n$ with $u_n\cdot(\prod_{i}a_i^{q_i})$, we see that conditions (1) and (2) of Lemma \ref{lemma:decomp} are satisfied for $r(S):=\max\{r_i\}$.   
 
Finally, since $\mathcal{O}_{K,S}^*$ is a finitely generated group (\cite[Prop. 14.2 ]{Rosen}), we can absorb $\ell$-powers into $y_n$ and write $u_n=\textbf{u}_1^{r_1}\cdot \textbf{u}_2^{r_2}\dots \textbf{u}_t^{r_t}$ for some basis $\{\textbf{u}_i\}$ of $\mathcal{O}_{K,S}^*$ and some integers $0\leq r_i\leq\ell-1$. In particular, we assume that the height $h(u_n)$ is bounded independently of $n\geq0$.   
\end{proof}
\textbf{It is our goal to show that} $\mathbf{d_n\in\mathcal{O}_K}$ \textbf{contains primitive prime divisors outside of} $\mathbf{S}$. To do this, first note that conditions (b) and (c) imply that $\phi$ has good reduction (see \cite[Theorem. 2.15]{Silv-Dyn}) modulo the primes in $V_K\mysetminus S$. In particular, if $\mathfrak{p}$ is a prime of $\mathcal{O}_{K,S}$ such that $v_{\mathfrak{p}}(d_n)>0$ and $v_{\mathfrak{p}}(\phi^m(b))>0$ for some $1\leq m\leq n-1$ and $\phi^m(b)\neq0$, then 
\begin{equation}{\label{congruence}} \phi^{n-m}(0)\equiv\phi^{n-m}(\phi^{m}(b))\equiv\phi^n(b)\equiv0\Mod{\mathfrak{p}};
\end{equation}
see \cite[Theorem. 2.18]{Silv-Dyn}.
Therefore, if $n\geq1$ is such that $d_n$ has no primitive prime divisors outside os $S$, then we have the refined  factorization of ideals in $\mathcal{O}_{K,S}$: 
\begin{equation}{\label{refinement}}  (d_n)=\prod \mathfrak{p}_j^{e_j},\;\;\text{where}\;\; \mathfrak{p}_j\big\vert\phi^{t_j}(b)\;\text{or}\; \mathfrak{p}_j\big\vert\phi^{t_j}(0)\;\; \text{for some}\; 1\leq t_j\leq\Big\lfloor \frac{n}{2}\Big\rfloor. 
\end{equation}
Moreover, as noted above, we may assume that $0\leq e_j\leq\ell-1$. Hence, Lemma \ref{lemma:decomp} and Lemma \ref{htsbydiv} imply that there is a constant $c(K,S)$ such that 
\begin{equation}{\label{htestimate}} \boxed{h(d_n)\leq(\ell-1)\bigg(\sum_{i=1}^{\lfloor\frac{n}{2}\rfloor}h(\phi^i(b))+ \sum_{j=1}^{\lfloor\frac{n}{2}\rfloor}h(\phi^j(0))\bigg)+c(K,S)}; 
\end{equation}
here, we use that $\phi^j(0)\neq0$ for all $j\geq1$. Now, choose an integer $m=m_\phi$  and $\ell=\ell_\phi$ such that 
\[C:=C_{\ell,m}(\phi): Y^\ell=\phi^{m}(X)\] is a nonsingular curve of genus at least two. This is possible in the function field case since $\phi$ is dynamically $\ell$-power non-isotrivial. As for the number field case, since zero is not periodic, the Riemann-Hurwitz formula implies that $\#\phi^{-m}(0)\geq d^{m-2}$ for all $m\geq1$; see \cite[Exercise 3.37]{Silv-Dyn}. Hence, we may choose $m$ such that $\#\phi^{-m}(0)\geq5$. Now choose $\ell$ coprime to the multiplicities of all roots of $\phi^m$, and apply \cite[Corollary 2.2]{Rafe} or \cite[Proposition 3.7.3]{ffields}.
 
If $n\leq m_\phi$ for all $n\in\mathcal{Z}(\phi,b)$ with $b\in\mathcal{O}_{K,S}$ and $\hat{h}_\phi(b)>0$, then we are done. Otherwise, we may assume that $n>m_\phi$ so that (\ref{decomp}) implies that
\[P_n(b):=\big(\phi^{n-m_\phi}(b)\;,\;y_n\cdot\mysqrt{-2}{1}{\ell}{u_n\cdot d_n}\, \big)\in C(\widebar{K})\;\;. \]
It follows from the Vojta Conjecture \cite[Conj. 24.1\,\text{or}\, 25.1]{Vojta} for number fields or any of the bounds (suitable to positive characteristic) discussed in the introductions of \cite{Kim,htineq} for function fields, that there are non-negative constants $A_1=A_1(d,\ell_\phi,m_\phi)$ and  $A_2=A_2(\phi,d,\ell_\phi,m_\phi)$ such that the following inequality holds
\begin{equation}{\label{Szpiro}}
h_{\kappa(C)}(P_n(b))\leq A_1\cdot \mathfrak{d}(P_n(b))+A_2;
\end{equation} 
here $\kappa(C)$ is a canonical divisor class of $C$ with associated height function $h_{\kappa(C)}:C\rightarrow\mathbb{R}_{\geq0}$ (see \cite[\S2.2]{Voloch}), $\mathfrak{d}(P_n(b))$ is the logarithmic discriminant of $K(P_n(b))/K$ relative to $K$ (see \cite[\S 23]{Vojta}) over number fields, and
\[\mathfrak{d}(P_n(b)):=\frac{2\cdot \genus\big(K_n(b)\big)-2}{\big[K_n(b):K\big]}\;\;\;\;\text{for}\;\;\;K_n(b):=K\big(\mysqrt{-2}{1}{\ell}{u_n\cdot d_n}\,\big)\]
over function fields. We note that the bounds on (\ref{Szpiro}) have been made more explicit in \cite[\S2.2]{Voloch} over function fields, although we do not need them here.
 
On the other hand, if $K$ is a function field, then it follows from \cite[Prop. 3.7.3]{ffields} and the remark \cite[Remark 3.7.5]{ffields} that there is a constant $B_1=B_1(\mathfrak{g}_K)$, depending only on the genus $\mathfrak{g}_K$ of $K$, such that $\mathfrak{d}(P_n(b))\leq h(u_n\cdot d_n)+B_1$. Likewise, $h(u_n\cdot d_n)\leq h(d_n)+B(K,S)$, since the height of $u_n$ is absolutely bounded. Hence, there is a constant $B(K,S)$, depending only on $K$ and $S$, such that $\mathfrak{d}(P_n(b))\leq h(d_n)+B(K,S)$. Similarly, $\mathfrak{d}(P_n(b))\leq h(d_n)+B(K,S)$ over number fields; see \cite[\S 23]{Vojta}. In either case, we deduce from (\ref{Szpiro}) that
\begin{equation}{\label{bdwodisc}} h_{\kappa(C)}(P_n(b))\leq A_1\cdot h(d_n)+A_3
\end{equation}
where $A_3=A_3(\phi,d,m_\phi,\ell_\phi,\mathfrak{g}_K,S)=(B_1(\mathfrak{g}_K)+B_2(K,S))\cdot A_1(d,m_\phi)+A_2(\phi,d,\ell_\phi,m_\phi)$. In particular, all constants are independent of the basepoint. However, we want a bound relating $h(\phi^{n-m}(b))$ and $h(d_n)$. To do this, we note that if $\mathcal{D}_1$ is an ample divisor on $C$ and $\mathcal{D}_2$ is an arbitrary divisor, then
\begin{equation}{\label{Divisor}} \lim_{h_{\mathcal{D}_1}(P)\rightarrow\infty}\frac{h_{\mathcal{D}_2}(P)}{h_{\mathcal{D}_1}(P)}=\frac{\deg\mathcal{D}_2}{\deg{\mathcal{D}_1}},\;\;\;\;\;\;P\in C(\widebar{K});
\end{equation} 
see \cite[Thm III.10.2]{SilvA}. In particular, if $\pi:C\rightarrow\mathbb{P}^1$ is the covering $\pi(X,Y)=X$, then a degree one divisor on $\mathbb{P}^1$ (giving the usual height $h_K$ on projective space) pulls back to a $\deg(\pi)$ divisor $\mathcal{D}_2$ on $C$ satisfying $h_{\mathcal{D}_2}(P)=h(\pi(P))$. We deduce from (\ref{Divisor}) that there exists a constant $\delta=\delta(\phi,m,\ell)$ satisfying: 
\begin{equation}{\label{limit}} h_{\kappa(C)}(P)>\delta\;\;\;\;\;\text{implies}\;\;\;\;\; h(\pi(P))\leq \frac{\ell}{2\mathfrak{g}_C-2}\cdot h_{\kappa(C)}(P)+1\leq \frac{\ell}{2}\cdot h_{\kappa(C)}(P)+1
\end{equation}
for all $P\in C(\widebar{K})$; here $\mathfrak{g}_C\geq2$ is the genus of $C$, and $\deg(\pi)\leq\ell$. As an alternative to the height ratio on (\ref{Divisor}) over function fields, we could use explicit calculations of $h_{\kappa(C)}$ in \cite[\S4]{Voloch}; however, we prefer a uniform approach over global fields when possible. On the other hand, we note that the set of points $\{P_n(b)\}\subseteq C(\widebar{K})$ where (\ref{limit}) fails, 
\[T_C:=\big\{P_n(b)\;\big\vert\; h_{\kappa(C)}(P_n(b))\leq\delta\}\subseteq\big\{P\in C(\bar{K})\;\big\vert\;\; h_{\kappa(C)}(P)\leq\beta\;\;\,\text{and}\,\;\; [K(P):K]\leq\ell\big\},\]
is finite, since the canonical class $\kappa(C)$ is ample in genus at least two; see \cite[Thm. 10.3]{SilvA}. In particular, we deduce that $P_n(b)\in T_C$ implies $h(\phi^{n-m}(b))$ is bounded. Hence, Lemma \ref{0orbit} implies that $n$ is bounded independently of $b$ as claimed. Conversely, if $P_n(b)\not\in T_C$, then (\ref{htestimate}), (\ref{bdwodisc}) and (\ref{limit}) imply that 
\begin{equation}{\label{bdwocan}}
h(\phi^{n-m_\phi}(b))\leq\frac{\ell(\ell-1)}{2}\cdot A_1\cdot \bigg(\sum_{i=1}^{\lfloor\frac{n}{2}\rfloor}h(\phi^i(b))+ \sum_{j=1}^{\lfloor\frac{n}{2}\rfloor}h(\phi^j(0))\bigg)+A_4,
\end{equation}
for $A_4=A_4(\phi,d,\ell_\phi,m_\phi,\mathfrak{g}_K,K,S)=\ell/2\cdot (A_1\cdot c(K,S)+A_3)+1$. However, for wandering basepoints the left hand side of (\ref{bdwocan}) grows like $d^{n-m}$, while the right hand side of (\ref{bdwocan}) grows like $d^{\lfloor\frac{n}{2}\rfloor+1}$. In particular,  since $m$ is fixed, $n$ is bounded. To make this formal, we use properties of $\hat{h}_\phi$, the canonical height function attached to $\phi$. Specifically, it is known that:
\begin{equation}{\label{standard}}\text{(a).}\;\;\;\hat{h}_\phi=h+O(1)\;\;\;\;\;\;\;\;\;\;\;\; \text{(b).}\;\;\;\hat{h}_\phi(\phi^s(\alpha))=d^s\cdot \hat{h}_\phi(\alpha)
\end{equation}
for all $\alpha\in \widebar{K}$ and all integers $s\geq0$; see \cite[Thm. 3.20]{Silv-Dyn}. In particular, we deduce from (\ref{bdwocan}) that
\begin{equation}{\label{main}} d^{n-m_\phi}\cdot\hat{h}_\phi(b)\leq\bigg( \frac{\ell(\ell-1)}{2} \big(\hat{h}_\phi(b)+\hat{h}_\phi(0)\big)A_1\bigg)\frac{d^{\lfloor\frac{n}{2}\rfloor+1}-1}{d-1}+\bigg(\frac{\ell(\ell-1)}{2}A_1 B_\phi\bigg)n+A_5;
\end{equation}
here $|\hat{h}_\phi-h|\leq B_\phi$ from (\ref{standard}) and $A_5(\phi,d,\ell_\phi,m_\phi,\mathfrak{g}_K,K,S)=A_4+B_\phi$. Moreover, since $\hat{h}_\phi(b)\neq0$ and $\hat{h}_{\phi,K}^{\min}$ is positive (see the proof Lemma \ref{0orbit}), it follows that
\begin{equation} d^{n-m_\phi}\leq B_4\cdot d^{\lfloor\frac{n}{2}\rfloor+1}+B_5\cdot n+B_6,
\end{equation}
where the constants $B_4$, $B_5$, and $B_6$ are all independent of the basepoint.   
However, such an inequality implies for instance that
\begin{equation}
n\leq 5+2m_\phi+2\log_d\big(B_{\max}\big)
\end{equation} 
where $B_{\max}:=\max\{B_4,B_5,B_6\}$. Hence, we see that $n$ is bounded, or equivalently 
\[\sup\big\{n\;\big|\; n\in\mathcal{Z}(\phi,b)\;\,\text{for some}\;\, \hat{h}_{\phi}(b)>0,\; b\in\mathcal{O}_{K,S}\big\}\]
is finite. Therefore, when $0\not\in \mathcal{O}_\phi(b)$, we have shown that $\#\mathcal{Z}(\phi,S)$ is bounded by a constant that depends only on $\phi$, $S$, and $K$. On the other hand, suppose that $0\in\mathcal{O}_\phi(b)$. Lemma \ref{0orbit} implies that there exists $n_\phi$ (depending only on $\phi$ and $K$) such that $\phi^n(a)\neq0$ for all $n\geq n_\phi$ and all $a\in K$. It follows that $\mathfrak{Z}_{\phi,K}:=\big\{a\in K: 0\in\mathcal{O}_\phi(a)\big\}$ is a finite set. In particular, the set of primes
\[S_0:=\big\{\text{primes}\;\mathfrak{p}\in V_K\,:\, v_\mathfrak{p}(\phi^n(a))>0\,\;\text{for some}\;a\in\mathfrak{Z}_{\phi,K},\; n\leq n_\phi,\; \phi^n(a)\neq0\big\}\]
is also finite. Let $S'=S_0\cup S$, where $S$ satisfies conditions (a)-(d) above. We have shown (in the first case of Theorem \ref{thm:Avg}) that there exists $N(\phi,S')$ such that $\phi^n(c)$ contains primitive prime divisors outside of $S'$ for all $n\geq N(\phi,S')$ and all $c\in\mathcal{O}_{K,S}\mysetminus\mathfrak{Z}_{\phi,K}$. We apply this to $c:=\phi^{n_\phi}(b)$, from which it follows that $n\in\mathcal{Z}(\phi,b)$ implies $n\leq n_\phi+N(\phi,S')$. As in the previous case, this bound only depends on $\phi$, $S$ and $K$.  
 
We note that the use of the Vojta conjecture in the number field setting is not necessary to prove the finiteness of $\mathcal{Z}(\phi,S)$ when zero is preperiodic. To see this, note that Lemma \ref{lemma:decomp} and (\ref{congruence}) imply that $\{d_n\}_{n\in\mathcal{Z}(\phi,S)}$ is finite, since there are only finitely many primes not in $S$ dividing elements of the orbit of zero (equivalently $d_n$ has bounded height by Lemma \ref{htsbydiv}). Now define \[K(\phi,S):=K\big(\mysqrt{-2}{1}{\ell}{u_n\cdot d_n}\;:\;n\in\mathcal{Z}(\phi,S)\big),\]
a finite extension of $K$. Hence, Faltings' Theorem \cite{Faltings} implies that the set of $K(\phi,S)$ rational points of $C$ is finite. In particular, $\phi^{n-m_\phi}(b)$ has bounded height, and Lemma \ref{0orbit} implies that $n$ is bounded as claimed.       
 
As for statements about averages, we consider the set
\begin{equation}{\label{densityset}}
T_{\phi,n,S}:=\big\{b\in\mathcal{O}_{K,S}\;\vert\; n\in\mathcal{Z}(\phi,b),\;\,\hat{h}_\phi(b)\neq0\big\}.
\end{equation}
It follows from the fact that $\mathcal{Z}(\phi,S)$ is finite that $T_{\phi,n,S}=\varnothing$ for all $n$ sufficiently large. Although it may be the case that $T_{\phi,n,S}$ is infinite for some $n$, we will show that $T_{\phi,n,S}$ is always a sparse set $\mathcal{O}_{K,S}$. With this in mind, for any subset $E\subseteq \mathcal{O}_{K,S}$ we define the (natural) upper density $\widebar{\delta}_{\Nat,S}(E)$ of $E$ to be the quantity
\begin{equation} \widebar{\delta}_{\Nat,S}(E):=\limsup_{B\rightarrow\infty}\frac{\#\{b\in E\;|\; h(b)\leq B\}}{\#\{b\in\mathcal{O}_{K,S}\;|\; h(b)\leq B\}}:=\limsup_{B\rightarrow\infty}\frac{\#E(B)}{\#\mathcal{O}_{K,S}(B)}.
\end{equation}
In particular, if we set $N(\phi,S):=\sup\{n: n\in\mathcal{Z}(\phi,S)\}$, then we see that
\begin{equation}{\label{estimate}} \widebar{\Avg}(\mathcal{Z}(\phi),S)\leq N(\phi,S)\cdot\bigg(\sum_{n=1}^{N(\phi,S)}\widebar{\delta}_{\Nat,S}\big(T_{\phi,n,S}\big)\bigg).
\end{equation} 
Therefore, it suffices to prove that $\widebar{\delta}_{\Nat,S}(T_{\phi,n,S})=0$ for all $1\leq n\leq N(\phi,S)$, to deduce that $\widebar{\Avg}(\mathcal{Z}(\phi),S)=0$. To do this, we make a few auxiliary definitions: for all rational functions $f\in K(x)$ and all sets of primes $\mathcal{P}\subseteq V_K$, we define:
\begin{equation}{\label{suppset}}
I_{f,S,\mathcal{P}}:=\big\{b\in\mathcal{O}_{K,S}\;:\; \Supp(f(b))\subseteq\mathcal{P}\big\}.
\end{equation}
Here, for any $\alpha\in K$, the support $\Supp(\alpha)$ is the set of all primes $\mathfrak{p}$ such that $v_\mathfrak{p}(\alpha)>0$. Furthermore, let $\mathcal{P}_{0}$ be the finite set of prime divisors of the first $N(\phi,S)$-elements of the orbit of zero, that is $\mathcal{P}_{0}:=\{\Supp(\phi^n(0))\}_{n\leq N(\phi,S)}$. Finally, let $\mathcal{P}_f$ be the set of primes of bad reduction of $f$.
 
In particular, it follows from (\ref{congruence}) that    
\begin{equation}{\label{subset}}
T_{\phi,n,S}\subseteq I_{\phi^n,S,\mathcal{P}} \;\;\, \text{for}\,\; \mathcal{P}=S\cup\mathcal{P}_{0}\cup\mathcal{P}_f\,.
\end{equation}
Therefore, if we let $f:=\phi^n$ for any $n\leq N(\phi,S)$, then our average-zero result follows from Lemma \ref{lemma:density} below. However, because of its possible independent interest, we state Lemma \ref{lemma:density} for subsets $\mathcal{P}\subseteq V_K$ of (Dirichlet) density zero, not just finite subsets; see \cite{Cheba} and \cite[\S3]{Serre} for the relevant background and results on densities over global fields. Moreover, in what follows $\N(\mathfrak{p})=\#k_\mathfrak{p}$ is the size of the residue field.
\end{proof} 
\begin{lemma}{\label{lemma:density}} Let $f:\mathbb{P}^1\rightarrow\mathbb{P}^1$ be non-constant rational function, let $\mathcal{P}\subseteq V_K$, and let
\begin{equation}{\label{Dirchlet}}
\widebar{\delta}_{\Dir}(\mathcal{P}):=\lim_{s\rightarrow 1^+}\frac{\sum_{\mathfrak{q}\in\mathcal{P}}\,\N(\mathfrak{q})^{-s}}{\sum_{\mathfrak{q}\in V_K}\N(\mathfrak{q})^{-s}}=\lim_{s\rightarrow 1^+}\frac{\sum_{\mathfrak{q}\in\mathcal{P}}\,\N(\mathfrak{q})^{-s}}{\log(1/(s-1))}
\end{equation}
be the Dirichlet density of $\mathcal{P}$. If $\widebar{\delta}_{\Dir}(\mathcal{P})=0$, then $\widebar{\delta}_{\Nat,S}(I_{f,S,\mathcal{P}})=0$.
\end{lemma}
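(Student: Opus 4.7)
The plan is to convert the support condition $\Supp(f(b)) \subseteq \mathcal{P}$ into a family of local congruence constraints on $b$, one per prime outside $\mathcal{P}$, and then apply a standard sieve. Write $f = P/Q$ with coprime $P,Q \in \mathcal{O}_K[x]$ (clearing denominators if necessary), and let $\mathcal{P}_f \subseteq V_K$ be the finite set of primes at which $f$ fails to have good reduction (i.e., primes dividing $\Res(P,Q)$ or the leading coefficient of $Q$). For every prime $\mathfrak{p} \notin \mathcal{P} \cup S \cup \mathcal{P}_f$, membership $b \in I_{f,S,\mathcal{P}}$ forces $v_\mathfrak{p}(f(b)) \leq 0$, which by good reduction is equivalent to the reduction $\bar{b} \in k_\mathfrak{p}$ avoiding the finite set of roots of $\bar{f}$ in $k_\mathfrak{p}$.

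The key density input is that
\[\mathcal{Q}_f := \{\mathfrak{p} \notin \mathcal{P} \cup S \cup \mathcal{P}_f : \bar{f}\text{ has a root in }k_\mathfrak{p}\}\]
has positive Dirichlet density. By Chebotarev applied to the splitting field $L/K$ of $P$, the density of primes whose Frobenius class fixes at least one root of $P$ equals $\#\{\sigma \in \Gal(L/K) : \sigma\text{ fixes some root of }P\}/[L:K]$, which is strictly positive since the identity lies in the numerator. Subtracting the density-zero set $\mathcal{P}$ and the finite sets $S$ and $\mathcal{P}_f$ preserves positivity, and the definition (\ref{Dirchlet}) combined with monotone convergence then gives $\sum_{\mathfrak{p}\in\mathcal{Q}_f} 1/\N(\mathfrak{p}) = \infty$.

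Finally, for any finite $\mathcal{Q} \subseteq \mathcal{Q}_f$ the Chinese remainder theorem identifies $\mathcal{O}_{K,S}/\prod_{\mathfrak{p}\in\mathcal{Q}} \mathfrak{p}$ with $\prod_{\mathfrak{p}\in\mathcal{Q}} k_\mathfrak{p}$, and in each factor the allowed residues form a subset of proportion at most $1 - 1/\N(\mathfrak{p})$. Combining this with standard counts of $\mathcal{O}_{K,S}$-integers of bounded height in prescribed residue classes---lattice-point estimates over number fields and Riemann-Roch over function fields---yields, on letting $B \to \infty$ with $\mathcal{Q}$ held fixed,
\[\widebar{\delta}_{\Nat,S}(I_{f,S,\mathcal{P}}) \leq \prod_{\mathfrak{p}\in\mathcal{Q}}\left(1 - \frac{1}{\N(\mathfrak{p})}\right),\]
and the right-hand side tends to $0$ as $\mathcal{Q}$ exhausts $\mathcal{Q}_f$. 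The main obstacle is making the residue-class counting sufficiently uniform: for each fixed finite $\mathcal{Q}$ the error depends only on $\mathcal{Q}$ and is negligible relative to $\#\mathcal{O}_{K,S}(B)$ as $B \to \infty$, so no interchange of limits is required---however, one does need an equidistribution statement for $\mathcal{O}_{K,S}$-points modulo a fixed ideal with explicit $B$-dependence, which is routine over number fields via fundamental-domain estimates but in the function-field setting requires an application of Riemann-Roch to the relevant line bundles parametrizing $\mathcal{O}_{K,S}$-integers in a given residue class.
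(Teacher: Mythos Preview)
Your proposal is correct and follows essentially the same sieve argument as the paper: restrict to primes outside $\mathcal{P}\cup S\cup\mathcal{P}_f$ where $\bar{f}$ has a root, use Chebotarev to get positive Dirichlet density of this set, and then bound $\widebar{\delta}_{\Nat,S}(I_{f,S,\mathcal{P}})$ by a product $\prod_{\mathfrak{p}}(1-1/\N(\mathfrak{p}))$ over a finite set of such primes via the Chinese remainder theorem. The only cosmetic difference is that the paper justifies equidistribution in residue classes by translation invariance of $\delta_{\Nat,S}$ rather than by explicit lattice/Riemann--Roch counts, and finishes with a Mertens-type asymptotic $\prod_{\N(\mathfrak{p})\leq B}(1-1/\N(\mathfrak{p}))\sim C/\log(B)^{\delta_{\Dir}(\mathcal{P}')}$ rather than the divergence of $\sum 1/\N(\mathfrak{p})$; both routes yield the same conclusion.
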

\begin{proof}[Proof of Lemma \ref{lemma:density}] Since, $I_{f,S,\mathcal{P}_1}\subseteq I_{f,S,\mathcal{P}_2}$ whenever $\mathcal{P}_1\subseteq \mathcal{P}_2$, we may enlarge $\mathcal{P}$ and assume that $\mathcal{P}$ contains both $S$ and $\mathcal{P}_f$. Now let $\mathcal{P}'=\{\mathfrak{p}\notin\mathcal{P}: f\;\text{has a root}\Mod{\mathfrak{p}}\}$, and for $\mathfrak{p}\in\mathcal{P}'$ let $a_\mathfrak{p}$ be such a root$\Mod{\mathfrak{p}}$; note that this makes sense, i.e $f:\mathbb{P}^1(\mathbb{F}_\mathfrak{p})\rightarrow\mathbb{P}^1(\mathbb{F}_\mathfrak{p})$ is well defined, since $\mathfrak{p}\notin\mathcal{P}_f$. It follows from the Chebotarev Density Theorem, that $\delta_{\Dir}(\mathcal{P}')$ is positive. Let $\mathcal{P}''$ be any finite subset of $\mathcal{P}'$. By definition of $I_{f,S,\mathcal{P}}$, we see that
\begin{equation}{\label{sieve}} I_{f,S,\mathcal{P}}\subseteq \big\{b\in\mathcal{O}_{K,S}\,:\, b\not\equiv{a_p}\Mod{\mathfrak{p}} \,\;\text{for all}\;\mathfrak{p}\in\mathcal{P}''\big\}.
\end{equation}
Fix $\mathfrak{p}\in\mathcal{P}''$. Since $\delta_{\Nat,S}$ is translation invariant, $\delta_{\Nat,S}(a+\mathfrak{p})$ is independent of $a\in\mathcal{O}_{K,S}$. In particular, we add up $\delta_{\Nat,S}(a+\mathfrak{p})$ over representatives $a\in\mathcal{O}_{K,S}/\mathfrak{p}\mathcal{O}_{K,S}$ and see that the natural density of $\{b\in\mathcal{O}_{K,S}\,:\; b\equiv{a_p}\Mod{\mathfrak{p}}\}$ is $1/\N(\mathfrak{p})$ as expected. In particular, one computes via the Chinese Remainder Theorem that the natural density of the set displayed on the right hand side of (\ref{sieve}) is $\prod_{\mathfrak{p}\in\mathcal{P}''}\big(1-\frac{1\,}{\N(\mathfrak{p})}\big)$. On the other hand,
\begin{equation} \widebar{\delta}_{\Nat,S}(I_{f,S,\mathcal{P}})\leq \prod_{\mathfrak{p}\in\mathcal{P}',\; \N(\mathfrak{p})\leq B}\Big(1-\frac{1\,}{\N(\mathfrak{p})}\Big)\widesim{} C/\log(B)^{\delta_{\Dir}(\mathcal{P}')}
\end{equation}
as $B\rightarrow\infty$ for some positive constant $C$; see \cite[Exercise 3.3.2.2]{Serre}. In particular, since the density $\delta_{\Dir}(\mathcal{P}')\neq0$, we see that $\widebar{\delta}_{\Nat,S}(I_{f,S,\mathcal{P}})=0$ as claimed.        
\end{proof}  
\begin{lemma}{\label{0orbit}} Let $B>0$. There exists a positive integer $n_\phi(B)$ such that $h(\phi^n(b))\leq B$ implies $n<n_\phi(B)$\; for all $b\in K$ satisfying $\hat{h}_\phi(b)\neq0$.
\end{lemma}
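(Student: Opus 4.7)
The plan is to exploit the two standard identities for the canonical height recalled in (\ref{standard}): the bounded difference $|\hat{h}_\phi - h| \leq B_\phi$ and the functional equation $\hat{h}_\phi(\phi^n(b)) = d^n \hat{h}_\phi(b)$. The decisive extra ingredient is the positivity of the minimum of $\hat{h}_\phi$ over $K$-rational wandering points, i.e.,
\[
\hat{h}_{\phi,K}^{\min} \;:=\; \inf\bigl\{\hat{h}_\phi(b) \,:\, b \in K,\; \hat{h}_\phi(b) \neq 0\bigr\} \;>\; 0.
\]
This infimum is already invoked (without reproof) in the body of the argument for Theorem \ref{thm:Avg}.

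First I would justify $\hat{h}_{\phi,K}^{\min} > 0$ via the Northcott finiteness property: since $K$ is a global field (either a number field or a finite extension of $\mathbb{F}_q(t)$ with finite constant field), the set $\{b \in K : h(b) \leq C\}$ is finite for every $C$ (see \cite[Theorem 3.7]{Silv-Dyn}); combined with $|\hat{h}_\phi - h| \leq B_\phi$, the set $\{b \in K : \hat{h}_\phi(b) \leq C\}$ is finite as well. Restricting to the wandering locus $\{\hat{h}_\phi \neq 0\}$ gives a finite set of strictly positive values, whose minimum is a positive real number.

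Once $\hat{h}_{\phi,K}^{\min} > 0$ is in hand, the bound is immediate. For any $b \in K$ with $\hat{h}_\phi(b) \neq 0$ and any $n \geq 0$, one has
\[
d^n \cdot \hat{h}_{\phi,K}^{\min} \;\leq\; d^n \cdot \hat{h}_\phi(b) \;=\; \hat{h}_\phi\bigl(\phi^n(b)\bigr) \;\leq\; h\bigl(\phi^n(b)\bigr) + B_\phi.
\]
Assuming $h(\phi^n(b)) \leq B$, we conclude $d^n \leq (B + B_\phi)/\hat{h}_{\phi,K}^{\min}$, so it suffices to set
\[
n_\phi(B) \;:=\; \Bigl\lfloor \log_d\!\Bigl( \tfrac{B + B_\phi}{\hat{h}_{\phi,K}^{\min}} \Bigr) \Bigr\rfloor + 1,
\]
which depends only on $\phi$, $K$, and $B$, as required.

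The only step requiring care is the invocation of Northcott in the function field setting; this is why the paper's conventions restrict to finite extensions of $\mathbb{F}_q(t)$ (finite constant field), so that the set of $K$-points of bounded height is genuinely finite. With that caveat in place, the remainder of the proof is a one-line combination of (\ref{standard}) (a) and (b).
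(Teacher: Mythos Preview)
Your proposal is correct and follows essentially the same approach as the paper: both use the bounded difference $|\hat{h}_\phi-h|\leq B_\phi$ together with the functional equation $\hat{h}_\phi(\phi^n(b))=d^n\hat{h}_\phi(b)$, reduce the problem to the positivity of $\hat{h}_{\phi,K}^{\min}$, and verify that positivity by Northcott finiteness. The paper phrases the Northcott step by fixing a single wandering point $c_0$ and observing that $\{c\in\mathbb{P}^1(K):0<\hat{h}_\phi(c)<\hat{h}_\phi(c_0)\}$ is finite, but this is exactly your argument with $C=\hat{h}_\phi(c_0)$.
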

\begin{proof}[(Proof of Lemma \ref{0orbit})] Suppose that $b\in K$ and $h(\phi^n(b))\leq B$. Since $\hat{h}_\phi=h+O(1)$ and $\hat{h}_\phi(\phi^n(b))=d^{n}\cdot\hat{h}_{\phi}(b)$, we see that $d^n\cdot\hat{h}_\phi(b)=\hat{h}_\phi(\phi^n(b))\leq B'$ for some positive constant $B'$ depending of $\phi$ and $B$. Moreover,
\begin{equation}{\label{hatmin}} \hat{h}^{\min}_{\phi,K}:=\inf\big\{\hat{h}_\phi(c)\;|\: c\in\mathbb{P}^1(K),\,\;\hat{h}_\phi(c)>0\big\}
\end{equation} is strictly positive. To see this, choose an arbitrary wandering point $c_0\in \mathbb{P}^1(K)$ for $\phi$ (possible, for instance, by Northcott's Theorem \cite[Theorem. 3.12]{Silv-Dyn}), and note that
\[\hat{h}^{\min}_{\phi,K}=\inf\big\{\hat{h}_\phi(c)\;|\: c\in\mathbb{P}^1(K)\;\text{and}\; 0<\hat{h}_\phi(c)<\hat{h}_\phi(c_0)\big\}.\]
However, this latter set is finite and consists of strictly positive numbers; hence $\hat{h}^{\min}_{\phi,K}> 0$. In particular, it follows that $h_K(\phi^n(b))\leq B$ implies that $n\leq\log_d\big(B'/\hat{h}^{\min}_{\phi,K}\big)$ as claimed.   
\end{proof}
\begin{lemma}{\label{htsbydiv}} If $K$ is one of the global fields in Theorem \ref{thm:Avg}, then
\begin{equation}\label{lemma:htsbydiv} \sum_{\mathfrak{p}\subseteq\mathcal{O}_{K},v_\mathfrak{p}(\alpha)\geq0}v_\mathfrak{p}(\alpha)\cdot\N_{\mathfrak{p}}=h(\alpha)
\end{equation}
for all nonzero $\alpha\in\mathcal{O}_{K}$. On the other hand, $\sum_{v_\mathfrak{p}(\alpha)\geq0}v_\mathfrak{p}(\alpha)\N_{\mathfrak{p}}\leq h(\alpha)$ for all $\alpha\in K^*$.      
\end{lemma}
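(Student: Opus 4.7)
The plan is to derive both assertions from the Artin--Whaples product formula (in additive form). Recall that for any $\alpha \in K^*$ this reads
\[
\sum_{\mathfrak{p} \in V_K} v_\mathfrak{p}(\alpha) \N_\mathfrak{p} = 0
\]
in the function field case, while for number fields
\[
\sum_{\mathfrak{p} \in V_K} v_\mathfrak{p}(\alpha) \N_\mathfrak{p} = \frac{1}{[K:\mathbb{Q}]} \sum_{\sigma: K \to \mathbb{C}} \log|\sigma(\alpha)|
\]
after the usual rearrangement of signs; these are standard facts about global fields that one can either check directly on a uniformizer or import from \cite{ffields}. The key identity I would exploit is the tautology
\[
\sum_{v_\mathfrak{p}(\alpha) \geq 0} v_\mathfrak{p}(\alpha) \N_\mathfrak{p} \;=\; \sum_{\mathfrak{p} \in V_K} v_\mathfrak{p}(\alpha) \N_\mathfrak{p} \;-\; \sum_{\mathfrak{p} \in V_K} \min(v_\mathfrak{p}(\alpha), 0) \N_\mathfrak{p},
\]
valid for any $\alpha \in K^*$.

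In the function field case, the first sum on the right vanishes by the product formula and the second equals $-h(\alpha)$ by definition (\ref{ffhtdef}), yielding an exact equality $\sum_{v_\mathfrak{p}(\alpha) \geq 0} v_\mathfrak{p}(\alpha) \N_\mathfrak{p} = h(\alpha)$. In the number field case, bounding $\log|\sigma(\alpha)| \leq \max(\log|\sigma(\alpha)|, 0)$ in the first sum and comparing with (\ref{nfhtdef}) gives the inequality $\sum_{v_\mathfrak{p}(\alpha) \geq 0} v_\mathfrak{p}(\alpha) \N_\mathfrak{p} \leq h(\alpha)$. This already establishes the second (inequality) assertion in both settings.

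To pass to the assertion for $\alpha \in \mathcal{O}_K$, I would restrict the range of summation to primes $\mathfrak{p} \subseteq \mathcal{O}_K$. In the number field case, every finite $\mathfrak{p}$ satisfies $v_\mathfrak{p}(\alpha) \geq 0$, so the restriction is vacuous and the inequality just proved applies. In the function field case, restricting to $\mathfrak{p} \neq \mathfrak{p}_0$ leaves $\sum_{\mathfrak{p} \neq \mathfrak{p}_0} v_\mathfrak{p}(\alpha) \N_\mathfrak{p} = -v_{\mathfrak{p}_0}(\alpha) \N_{\mathfrak{p}_0}$ by the product formula; moreover any nonzero $\alpha \in \mathcal{O}_K$ satisfies $v_{\mathfrak{p}_0}(\alpha) \leq 0$ (else all valuations would be nonnegative with at least one strictly positive, again violating the product formula), so this matches $h(\alpha) = -v_{\mathfrak{p}_0}(\alpha) \N_{\mathfrak{p}_0}$ as read off from (\ref{ffhtdef}). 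There is no real obstacle; the whole proof is a careful unwinding of the product formula against the height definitions (\ref{ffhtdef}) and (\ref{nfhtdef}), with the only subtle point being that in the number field case the statement for $\alpha \in \mathcal{O}_K$ is really an inequality (strict when some $|\sigma(\alpha)| < 1$), which is exactly what is needed in the estimate (\ref{htestimate}).
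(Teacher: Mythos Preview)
Your function-field argument is fine and matches the paper's. The number-field case, however, has a real gap: you only establish the inequality $\sum_{v_\mathfrak{p}(\alpha)\geq 0} v_\mathfrak{p}(\alpha)\N_\mathfrak{p}\leq h(\alpha)$ for $\alpha\in\mathcal{O}_K$ and then assert that this ``is really an inequality\dots which is exactly what is needed in (\ref{htestimate}).'' Both parts of that assertion are incorrect.

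First, the lemma claims an \emph{equality} for $\alpha\in\mathcal{O}_K$, and this equality genuinely holds for the fields in Theorem~\ref{thm:Avg}. The missing observation is that when $K=\mathbb{Q}$ or $K$ is imaginary quadratic, every nonzero $\alpha\in\mathcal{O}_K$ satisfies $|\sigma(\alpha)|\geq 1$ for all embeddings $\sigma:K\to\mathbb{C}$ (for $\mathbb{Q}$ this is obvious; for imaginary quadratic $K$ one has $|\sigma(\alpha)|^2=\N_{K/\mathbb{Q}}(\alpha)\in\mathbb{Z}_{>0}$). Hence $\min(\log|\sigma(\alpha)|,0)=0$ for each $\sigma$, and the archimedean defect in your identity vanishes, upgrading your inequality to equality. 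This is exactly the content of the paper's proof and is the reason the theorem is restricted to these particular number fields (see also the remark following the lemma).

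Second, and more seriously, the direction of inequality you prove is useless for (\ref{htestimate}). There one needs $h(d_n)\leq (\ell-1)\big(\sum h(\phi^i(b))+\sum h(\phi^j(0))\big)+c(K,S)$, i.e.\ an \emph{upper} bound on $h(d_n)$ in terms of valuation data. Your inequality bounds the valuation sum above by the height, which goes the wrong way. The argument actually runs: use the \emph{equality} $h(d_n)=\sum_\mathfrak{p} v_\mathfrak{p}(d_n)\N_\mathfrak{p}$ (part one, applied to $d_n\in\mathcal{O}_K$), bound each $v_\mathfrak{p}(d_n)$ via Lemma~\ref{lemma:decomp} and (\ref{refinement}), and then apply the \emph{inequality} (part two) to the elements $\phi^i(b),\phi^j(0)\in K^*$ to pass back to heights. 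So both halves of the lemma are used, in opposite directions, and the equality half cannot be weakened.
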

\begin{proof} Let $K$ be a function field. If $\alpha\in\mathcal{O}_K=\mathcal{O}_{\mathfrak{p}_0}$ is non-constant, then $v_{\mathfrak{p}_0}(\alpha)<0$, and the claim follows from (\ref{ffhtdef}), i.e. the number of zeros equals the number of poles (when counted with the correct multiplicity). On the other hand, let $K/\mathbb{Q}$ be a number field. Then the product formula implies
\begin{equation}{\label{alt}}
h(\alpha)=\sum_{\mathfrak{p}\subseteq\mathcal{O}_{K}, v_\mathfrak{p}(\alpha)\geq0}v_{\mathfrak{p}}(\alpha)\cdot\N_{\mathfrak{p}} \;-\; \frac{1}{[K:\mathbb{Q}]}\sum_{\sigma:K\rightarrow\mathbb{C}}\min(\log|\sigma(\alpha)|,0).
\end{equation}   
In particular, if $K=\mathbb{Q}$ or $K/\mathbb{Q}$ is an imaginary quadratic extension, then one verifies directly that $|\sigma(\alpha)|>1$ for all $\sigma:K\rightarrow\mathbb{C}$ and all non-zero $\alpha\in\mathcal{O}_K\mysetminus\,\mathcal{O}_K^*$. Therefore, (\ref{alt}) implies the claim for such integers. Conversely, if $\alpha\in\mathcal{O}_K^*$, then $\alpha$ is a root of unity and $h(\alpha)=0$. On the other hand, if $\alpha$ is a unit, then $v_{\mathfrak{p}}(\alpha)=0$ for all $\mathfrak{p}$ and (\ref{lemma:htsbydiv}) holds. In either setting, we see that the inequality, $\sum_{v_\mathfrak{p}(\alpha)\geq0}v_\mathfrak{p}(\alpha)\N_{\mathfrak{p}}\leq h(\alpha)$ for all $\alpha\in K^*$, follows from the product formula.        
\end{proof}
\begin{remark} We note that Theorem \ref{thm:Avg} part 1(a) is a strengthening of the main result of \cite{Rice}. Moreover, for results on $\mathcal{Z}(\phi,b)$ when $0\in\Per(\phi)$, see \cite{Ing-Silv}.
\end{remark}
\begin{remark} In characteristic zero, the finiteness of $\mathcal{Z}(\phi,S)$ holds for $K=k(t)$ with essentially the same proof: use the fact that $\mathcal{O}_{K,S}$ is unique factorization domain for all $S$ and that $\hat{h}_{\phi,K}^{\min}$ is positive \cite[Remark 1.7(ii)]{Baker}. On the other hand, the proof of Theorem \ref{thm:Avg} breaks down when $K\neq k(t)$, since the class group of $K$ is not finite. Note also that $\widebar{\Avg}(\mathcal{Z}(\phi),S)$ does not make sense for characteristic zero function fields, since the Northcott property fails.        
\end{remark}
\begin{remark} For number fields $K/\mathbb{Q}$, the height calculation in Lemma \ref{htsbydiv} fails whenever the group of units $\mathcal{O}_K^*$ is infinite. In particular, one cannot in general calculate heights of algebraic integers by solely keeping track of divisors. Therefore, in order to generalize Theorem \ref{thm:Avg} to all number fields, one would need to estimate $|\phi^n(b)|_{\sigma}$ for the archimedean places $\sigma:K\rightarrow\mathbb{C}$ as well. 
\end{remark}
\begin{remark} Of course, one would like to know whether Theorem \ref{thm:Avg} holds for rational functions. However, in Lemma \ref{lemma:decomp} and elsewhere in the proof of Theorem \ref{thm:Avg}, we used that $\phi^n(b)\in\mathcal{O}_{K,S}$ for all $n$, a property that will fail in general. For instance, over number fields Silverman has shown that $\phi^2\not\in\widebar{K}[x]$ implies $\mathcal{O}_{\phi}(b)\cap\mathcal{O}_{K,S}$ is finite for all $b\in K$; see \cite{Silv-Duke} for Silverman's integral point theorem, and see \cite{Me-Avg} for an average-version.    
\end{remark}
By analogy with the function field case, we conjecture that Theorem \ref{thm:Avg} holds over all number fields without assumptions on the orbit of zero.
\begin{conjecture}{\label{numflds}} Let $K/\mathbb{Q}$ and $\phi\in K[x]$ be a polynomial of degree $d\geq2$. If $\phi(x)\neq c\cdot x^d$, then $\mathcal{Z}(\phi,S)$ is finite and $\widebar{\Avg}(\mathcal{Z}(\phi),S)=0$\, for all finite subsets $S\subseteq V_K$.    
\end{conjecture}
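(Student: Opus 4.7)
The plan is to extend the proof of Theorem \ref{thm:Avg}(1)(b) to an arbitrary number field $K$, overcoming the obstruction identified in the preceding remark: Lemma \ref{htsbydiv} fails when $\mathcal{O}_K^*$ is infinite, so the archimedean contribution to $h(d_n)$ must be controlled by an additional unit-adjustment step. The exclusion $\phi(x)\neq c\cdot x^d$ guarantees via Riemann--Hurwitz that $\#\phi^{-m}(0)\geq 5$ for some $m$, so that for $\ell$ coprime to the multiplicities of the roots of $\phi^m$ the curve $C_{\ell,m}(\phi)\colon Y^\ell=\phi^m(X)$ is smooth of genus $\geq 2$; Vojta's conjecture then applies exactly as in the original proof. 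The case $0\in\PrePer(\phi)$ will be handled as in Theorem \ref{thm:Avg}(1)(a) (Faltings in place of Vojta) once the archimedean issue is settled, and the case $0\in\Per(\phi)$ reduces via the final paragraph of the original proof (iterate past the finite set $\mathfrak{Z}_{\phi,K}$) to the wandering-orbit situation.

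Lemma \ref{lemma:decomp} continues to hold verbatim over any number field, since its proof uses only finiteness of the class group and finite generation of $\mathcal{O}_{K,S}^*$; so we may write $\phi^n(b)=u_n\cdot d_n\cdot y_n^\ell$ with $h(u_n)$ bounded independently of $n$, and the congruence argument \eqref{congruence} still gives
\begin{equation*}
\sum_{v_\mathfrak{p}(d_n)>0}v_\mathfrak{p}(d_n)\cdot \N_\mathfrak{p}\;\leq\;(\ell-1)\bigg(\sum_{i=1}^{\lfloor n/2\rfloor}h(\phi^i(b))+\sum_{j=1}^{\lfloor n/2\rfloor}h(\phi^j(0))\bigg)+c(K,S).
\end{equation*}
The key new step replaces $(d_n,y_n)$ by $(d_n v^\ell,\, y_n v^{-1})$ for an appropriate $v\in\mathcal{O}_{K,S}^*$, which leaves both $u_n$ and the point $P_n(b)\in C_{\ell,m}(\phi)(\bar K)$ unchanged (hence also $\mathfrak{d}(P_n(b))$), and chooses $v$ so that every archimedean absolute value of $d_n v^\ell$ is comparable to $|N_{K/\mathbb{Q}}(d_n)|^{1/[K:\mathbb{Q}]}$. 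Since $(\mathcal{O}_{K,S}^*)^\ell$ is a finite-index sublattice of the $S$-unit log lattice, Dirichlet's $S$-unit theorem produces a fundamental domain of covolume depending only on $(K,S,\ell)$; choosing $v$ so that $\ell\log|\sigma(v)|$ brings the archimedean profile of $d_n$ into this fundamental domain yields
\begin{equation*}
h(d_n v^\ell)\;\leq\;C_1\sum_{v_\mathfrak{p}(d_n)>0}v_\mathfrak{p}(d_n)\cdot\N_\mathfrak{p}+C_2,
\end{equation*}
with $C_1,C_2$ depending only on $(K,S,\ell)$.

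Combining the two displayed bounds recovers the analog of \eqref{htestimate}, and Vojta's conjecture, via $\mathfrak{d}(P_n(b))\leq h(u_n d_n v^\ell)+O(1)$ and the height-pairing chain \eqref{bdwodisc}--\eqref{limit}, then yields an inequality of the shape \eqref{bdwocan}. The endgame of \eqref{main} forces $n$ to be bounded by a constant depending only on $(\phi,K,S)$, proving finiteness of $\mathcal{Z}(\phi,S)$. The average statement $\widebar{\Avg}(\mathcal{Z}(\phi),S)=0$ then follows verbatim from the $T_{\phi,n,S}$ argument via Lemma \ref{lemma:density}, which already holds for arbitrary $K$.

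The principal obstacle is the unit-adjustment step: the classical existence of small representatives of $K^*/(K^*)^\ell$ must be made \emph{uniform in $n$} with constants depending only on $(K,S,\ell)$. One must also verify that the restriction to $\ell$-th power units (rather than all units) degrades the bound by at most an additive constant absorbable into $C_2$, since any loss of uniformity here would destroy the $d^{n-m}$ versus $d^{\lfloor n/2\rfloor+1}$ comparison that closes the argument. A secondary but easier point is patching the $0\in\PrePer(\phi)$ case via Faltings as in Theorem \ref{thm:Avg}(1)(a), which now requires the unit adjustment to ensure the set $\{d_n v^\ell:n\in\mathcal{Z}(\phi,S)\}$ is itself finite, not merely its set of finite-prime valuations.
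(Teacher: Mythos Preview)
The statement you are attempting to prove is labeled \emph{Conjecture} in the paper, and the paper does not supply a proof; it is presented precisely as an open analog of Theorem~\ref{thm:Avg} for arbitrary number fields, motivated by the function field case. There is therefore no ``paper's own proof'' against which to compare your proposal.

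Your outline is not a proof of the conjecture as stated. The conjecture is \emph{unconditional}, whereas your argument invokes Vojta's conjecture at the crucial step (exactly as Theorem~\ref{thm:Avg}(1)(b) does). At best your plan would yield a conditional extension of Theorem~\ref{thm:Avg}(1)(b) to all number fields, not a proof of Conjecture~\ref{numflds}. Two further points deserve attention. First, your treatment of the case $0\in\Per(\phi)$ is not correct: the ``final paragraph'' of the proof of Theorem~\ref{thm:Avg} handles the situation $0\in\mathcal{O}_\phi(b)$ for the \emph{basepoint} $b$, not periodicity of $0$ under $\phi$; and the Riemann--Hurwitz estimate $\#\phi^{-m}(0)\geq d^{m-2}$ used to produce a genus-$\geq 2$ curve is stated in the paper under the hypothesis $0\notin\Per(\phi)$, so the hypothesis $\phi(x)\neq c\cdot x^d$ alone does not immediately give you the curve $C_{\ell,m}(\phi)$ you need. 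Second, the unit-adjustment step is plausible in spirit (it is the standard ``small representative modulo $(\mathcal{O}_{K,S}^*)^\ell$'' idea), but you must check that multiplying $d_n$ by $v^\ell$ does not destroy the bounded-valuation conditions at primes in $S$ established in Lemma~\ref{lemma:decomp}, since $v$ is only an $S$-unit; otherwise the non-archimedean contribution to $h(d_n v^\ell)$ is no longer controlled and the chain leading to \eqref{htestimate} breaks.
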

As for the key condition over function fields, we expect that most rational functions $\phi\in K(x)$ are dynamically $\ell$-power non-isotrivial for some $\ell\geq2$; see Definition \ref{def:isotriv} above. In fact, it is likely that one can choose infinitely such exponents.
\begin{conjecture}{\label{conj:isotriv}} Suppose that $\phi\in K[x]\mysetminus \widebar{\mathbb{F}}_p[x]$ satisfies the following conditions:
\begin{enumerate}
\item[\textup{(1)}] $\deg(\phi)\geq 2$,
\item[\textup{(2)}]  $\gcd(\deg(\phi),p)=1$,
\item[\textup{(3)}]  $\phi(x)\neq c\cdot x^d$ for all $c\in\widebar{\mathbb{F}}_p$\,.
\end{enumerate}
Then there exists $\ell\geq2$ and $m\geq1$, such that $\gcd(\ell,p)=1$ and $C_{\ell,m}(\phi): Y^\ell=\phi^m(X)$ is a non-isotrivial curve of genus at least $2$.  
\end{conjecture}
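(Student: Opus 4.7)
The plan is to carry out the same scheme underlying Theorem \ref{thm:unicrit} for the unicritical family $\phi_f(x)=x^d+f$: reduce the isotriviality of $C_{\ell,m}(\phi)\colon Y^\ell=\phi^m(X)$ to the geometry of the weighted zero-divisor $\Div_0(\phi^m)$ on $\mathbb{P}^1_K$, and then use hypotheses (1)--(3) to rule out that this divisor is $\mathrm{PGL}_2(\widebar{K})$-equivalent to a divisor defined over $\widebar{\mathbb{F}}_p$.

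The first two steps should be largely formal. For the genus bound, assumption (3) rules out the pathological case $\phi=c x^d$ (in which $\phi^{-m}(0)=\{0\}$ for all $m$), and an iterated Riemann-Hurwitz count in the spirit of \cite[Exercise 3.37]{Silv-Dyn} shows $\#\phi^{-m}(0)\to\infty$ as $m$ grows. Choosing $m$ so that $\phi^m$ has at least five distinct roots, and then picking $\ell\geq 2$ coprime to $p$ and to every root-multiplicity of $\phi^m$ (possible since $\gcd(d,p)=1$), the superelliptic genus formula of \cite[Corollary 2.2]{Rafe} or \cite[Proposition 3.7.3]{ffields} gives genus at least $2$. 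For the non-isotriviality criterion, one verifies by a standard argument that a superelliptic curve $Y^\ell=f(X)$ is isotrivial if and only if its weighted branch divisor on $\mathbb{P}^1_K$ is $\mathrm{PGL}_2(\widebar{K})$-equivalent to a divisor defined over $\widebar{\mathbb{F}}_p$. The task then reduces to showing that $\Div_0(\phi^m)$ is not of this form for some valid choice of $m$.

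The third step, and the main obstacle, is ruling out this Möbius equivalence. Heuristically a Möbius transformation contributes only three parameters, whereas $\phi^m$ has roughly $d^m$ roots and hence approximately $d^m-3$ independent cross-ratio invariants. Because $\phi\notin\widebar{\mathbb{F}}_p[x]$, some coefficient of $\phi$ genuinely varies with the parameter, and iteration ought to propagate this variation into many algebraically independent cross-ratios. To make this rigorous, I would attempt either (i) an explicit Kodaira-Spencer calculation for $C_{\ell,m}(\phi)$, along the lines of Theorem \ref{thm:unicrit}, exhibiting a tangent direction on $\Spec K$ on which the associated coboundary class does not vanish; or (ii) a specialization argument showing that, for $m$ sufficiently large, the classifying map from the base into $\mathcal{M}_g$ cannot be constant because after specialization the curve lands in an open dense locus of the moduli of genus-$g$ superelliptic covers. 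The core difficulty common to both routes is to control, inductively in $m$, how the $t$-dependence of a single coefficient of $\phi$ forces algebraically independent $t$-dependence in the multiplicity-weighted cross-ratios of $\Div_0(\phi^m)$; this will likely require distinguishing between post-critically finite and generic $\phi$, and may also require passing to a suitable dynamical model (e.g.\ a pre-image tree rooted at a non-exceptional point) in order to guarantee that ramification data, rather than merely the zero set, varies non-trivially with $t$.
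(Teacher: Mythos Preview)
The statement is a \emph{conjecture} that the paper explicitly does not prove: immediately after stating it the author writes ``at present, Conjecture~\ref{conj:isotriv} seems quite difficult.'' The paper only verifies the special case $\phi(x)=x^d+f$ (Theorem~\ref{thm:unicrit}) by an explicit Kodaira--Spencer matrix computation, and remarks that in general one might try to combine the formulas in \cite{Voloch} with information about $\Gal_K(\phi^m)$. So there is no proof in the paper to compare against.

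Your proposal is a reasonable research outline, and your route~(i) in step~3 is exactly what the paper suggests, but it is not a proof. You correctly isolate the crux---showing that $\Div_0(\phi^m)$ is not $\mathrm{PGL}_2(\widebar{K})$-equivalent to a configuration defined over $\widebar{\mathbb{F}}_p$---and then offer only a heuristic parameter count and two unexecuted strategies; the difficulty you flag (propagating the $t$-dependence of a single coefficient through iteration into genuinely independent cross-ratio invariants) is precisely why the paper leaves this open. There are also two smaller gaps in your step~1. First, hypothesis~(3) only excludes $\phi(x)=c\,x^d$ for $c\in\widebar{\mathbb{F}}_p$, not for non-constant $c\in K$; in the latter case $\phi^m(x)=c^{(d^m-1)/(d-1)}x^{d^m}$ and every $C_{\ell,m}(\phi)$ is an isotrivial genus-$0$ curve, so either the conjecture needs the stronger hypothesis or you must treat this case separately. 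Second, the bound $\#\phi^{-m}(0)\geq d^{m-2}$ you borrow from the proof of Theorem~\ref{thm:Avg} is established there under the standing assumption $0\notin\Per(\phi)$, which the conjecture does not impose.
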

However, at present, Conjecture \ref{conj:isotriv} seems quite difficult. On the other hand, there are formulas for the relevant Kodaira-Spencer maps in \cite{Voloch} in terms of the iterated preimages of zero. It is therefore possible that one can exploit knowledge of $\Gal_K(\phi^m)$ for some (hopefully small) $m\geq1$, to show that the KS map is non-zero.
 
We carry out these calculations for the polynomials $\phi(x)=x^d+f$, where the KS map computation becomes a sum over cyclotomic characters. Although special cases, these polynomials are important examples in several ways. First of all, the curves $C_{\ell,1}(\phi)$ are isotrivial for all $\ell\geq2$ while $C_{d,2}(\phi)$ and $C_{2,2}(\phi)$ are not (see \ref{eg:isotriv}), illustrating that one must in general pass to an iterate when studying primitive prime divisors. Secondly, we can use Theorem \ref{thm:Avg} to show that the Galois groups of iterates of $\phi(x)=x^d+f$ form a finite index subgroup of an infinite iterated wreath product of cyclic groups. In particular, they provide the first examples of a dynamical Serre-type open image theorem over global fields; compare to results for quadratic rational maps over number fields in \cite{ABCimplies,Rafe-Manes,Stoll-Galois}.      
\begin{remark}{\label{remark:isotriv}} It is worth pointing out that we could just as well use the more standard notion of isotriviality in Definition \ref{def:isotriv}, Theorem \ref{thm:Avg} and Conjecture \ref{conj:isotriv}: a curve is said to be isotrivial (in the standard sense) if after a base extension it may be defined over a finite field; see the Appendix in \cite{Voloch-Survey}. Strictly speaking, the key bounds in \cite{Kim,htineq} are for curves with non-zero Kodaira-Spencer class. However, the general case follows from this one as follows: assuming that $C_{\ell,m}(\phi)_{/K}$ is a non-isotrivial curve (in the the standard sense), there is an $r$ (a power of the characteristic of $K$) and a separable extension $L/K$ such that $C_{\ell,m}(\phi)$ is defined over $L^r$ and that the Kodaira-Spencer class of $C_{\ell,m}(\phi)$ over $L^r$ is non-zero. Now, if we apply any of the bounds in \cite{Kim,htineq} to $C_{\ell,m}(\phi)$ over $L^r$, then we achieve the bounds on (\ref{Szpiro}); the rest of the proof of Theorem \ref{thm:Avg} is the same. However, we prefer the more explicit (computational) condition that the Kodaira-Spencer map be non-zero. 
\end{remark}
On the other hand, given a quadratic polynomial $\phi(x)=(x-\gamma)^2+c\in K[x]$, the curve $C_{2,m}(\phi)$ for $m\geq2$ maps to the elliptic curve
\begin{equation}{\label{elliptic}} E_\phi: Y^2=(X-c)\cdot\phi(X)
\end{equation} 
via $(X,Y)\rightarrow\big(\phi^{m-1}(X), Y\cdot(\phi^{m-2}(X)-\gamma)\big)$. In particular, if the $j$-invariant \cite[III.1 Prop. 1.4]{Silv-Ellip} of $E_\phi$ is non-constant, then it follows from \cite[Proposition 3.3]{primdiv} that $C_{2,m}(\phi)$ is non-isotrivial (in the standard sense) for all $m\geq2$. Therefore, Remark \ref{remark:isotriv} implies an explicit form of Theorem \ref{thm:Avg} in the quadratic case:
\begin{corollary}{\label{cor:quad}} Let $K/\mathbb{F}_q(t)$ be a finite extension of odd characteristic. For all monic, quadratic polynomials $\phi\in K[x]$, write $\phi(x)=(x-\gamma)^2+c$ by completing the square. If $\phi(\gamma)\cdot \phi^2(\gamma)\neq 0$ and the quantity
\[\Big(\frac{27}{1728}\Big)\cdot j(E_\phi)=\frac{-\g^6 + 6\g^5c - 15\g^4c^2 + 9\g^4c + 20\g^3c^3 - 36\g^3c^2 + \dots + 27c^3}{\g^4c - 4\g^3c^2 + 6\g^2c^3 + 2\g^2c^2 - 4\g c^4 - 4\g c^3 + c^5 + 2c^4 + c^3} \]
is non-constant, then $\mathcal{Z}(\phi,S)$ is finite and $\widebar{\Avg}(\mathcal{Z}(\phi),S)=0$ for all $S$.  
\end{corollary}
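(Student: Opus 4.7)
The plan is to apply Theorem \ref{thm:Avg}(2) by showing that $\phi$ is dynamically $2$-power non-isotrivial. Concretely, I will exhibit an integer $m \geq 1$ for which $C_{2,m}(\phi) \colon Y^2 = \phi^m(X)$ is non-isotrivial of genus at least $2$; the conclusion then follows immediately from Theorem \ref{thm:Avg}(2).

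First, I verify that the map $\psi_m \colon (X,Y)\mapsto(\phi^{m-1}(X),\,Y(\phi^{m-2}(X)-\gamma))$ realizes $C_{2,m}(\phi)$ as a cover of $E_\phi$ for every $m\geq 2$; this reduces to the identity $\phi^{m-1}(X)-c=(\phi^{m-2}(X)-\gamma)^{2}$, which is just the defining relation $\phi(Z)=(Z-\gamma)^{2}+c$ applied at $Z=\phi^{m-2}(X)$. Next, I expand $E_\phi\colon Y^{2}=(X-c)((X-\gamma)^{2}+c)$: the cubic in $X$ has roots $c$ and $\gamma\pm\sqrt{-c}$, and distinctness of these three roots is equivalent to $c\neq 0$ together with $(c-\gamma)^{2}+c\neq 0$, i.e.\ to $\phi(\gamma)\phi^{2}(\gamma)\neq 0$, which confirms that $E_\phi$ is a smooth Weierstrass cubic. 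A routine application of the standard $j$-invariant formula, after shifting $X$ to depressed form, recovers the explicit rational expression displayed in the statement.

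Second, I transfer non-isotriviality along the covering. Since $j(E_\phi)$ is non-constant in $K$, the curve $E_\phi$ is non-isotrivial in the standard sense, and \cite[Proposition 3.3]{primdiv} (already invoked in the paragraph preceding the corollary) then forces $C_{2,m}(\phi)$ to be non-isotrivial in the standard sense for every $m\geq 2$. To satisfy the genus-$\geq 2$ requirement of Definition \ref{def:isotriv}, I take $m\geq 3$: when $\phi^{m}$ is separable, $C_{2,m}(\phi)$ is a smooth hyperelliptic curve of genus $2^{m-1}-1\geq 3$. Separability of $\phi^{m}$ fails precisely when $\phi^{j}(\gamma)=0$ for some $1\leq j\leq m$; the hypothesis excludes $j=1,2$, and since Theorem \ref{thm:Avg} assumes $0\notin\Per(\phi)$, at most one such index can occur. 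If $\phi^{3}(\gamma)=0$, a short orbit analysis shows that $\gamma$ is the unique double root of $\phi^{3}$ with multiplicity exactly $2$, so dividing out the factor $(X-\gamma)^{2}$ produces a smooth hyperelliptic normalization of $C_{2,3}(\phi)$ of genus $2$, which is birational to $C_{2,3}(\phi)$ and hence still non-isotrivial. Finally, Remark \ref{remark:isotriv} upgrades standard non-isotriviality to non-vanishing of the Kodaira--Spencer class as required in Definition \ref{def:isotriv}, and Theorem \ref{thm:Avg}(2) yields the finiteness of $\mathcal{Z}(\phi,S)$ together with $\widebar{\Avg}(\mathcal{Z}(\phi),S)=0$.

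The main obstacle is the bookkeeping around genus and separability: the natural choice $m=2$ produces only a genus-$1$ cover, forcing one to take $m\geq 3$, at which point one must confirm either that $\phi^{m}$ is separable or (in the single bad case permitted by the orbit of $\gamma$) that the normalization still has genus at least $2$. The $j$-invariant calculation itself is straightforward symbolic algebra, and every other ingredient is already assembled in the paper.
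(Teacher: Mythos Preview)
Your approach is correct and essentially matches the paper's: the paper simply records the covering $C_{2,m}(\phi)\to E_\phi$ via $(X,Y)\mapsto(\phi^{m-1}(X),\,Y(\phi^{m-2}(X)-\gamma))$, invokes \cite[Proposition~3.3]{primdiv} to deduce standard non-isotriviality of $C_{2,m}(\phi)$ from the non-constant $j$-invariant, and then appeals to Remark~\ref{remark:isotriv} and Theorem~\ref{thm:Avg}(2). You supply the genus-$\geq 2$ and separability bookkeeping (passing to $m=3$ and normalizing in the one bad case $\phi^3(\gamma)=0$) that the paper leaves entirely implicit; your observation that the standing hypothesis $0\notin\Per(\phi)$ of Theorem~\ref{thm:Avg} is being tacitly carried over is also correct.
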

See Corollary 1.2 and Corollary 1.4 of \cite{primdiv} for statements about the Galois groups $G_K(\phi)$ under the hypotheses of Corollary \ref{cor:quad} above.         
\end{section}
\begin{section}{3. Dynamical Galois groups and the Kodaira-Spencer map}
We now use our results on dynamical Zsigmondy sets to study dynamical Galois groups. To do so, we first introduce the necessary background material on wreath products, following the presentation in \cite[\S3.9]{Silv-Dyn} and the results in \cite{Juul}.      
\begin{definition}
Let $G$ be a group acting on an index set $A$, and let $H$ be an abelian group with its group law written additively. The set of maps $\Map(A,H)$ is naturally a group: for $i_1,i_2\in\Map(A,H)$, define  
\[ (i_1+1_2):H\rightarrow H,\;\;\;\; (i_1+i+2)(a)=i_1(a)+i_2(a).\]
Since $G$ acts on $A$, it comes equipped with an action on $\Map(A,H)$ as follows: 
\[g:\Map(A,H)\rightarrow\Map(A,H),\;\;\;\; g(i)(a)=i(g(a))\]
for all $g\in G$, $i\in\Map(A,H)$, and $a\in A$. The \emph{wreath product} of $G$ and $H$ (relative to $A$) is the set $\Map(A,H)\times G$ with the group law  
\[(g_1,i_1)\,*\,(g_2,i_2)=(g_2(i_1)+i_2\,,\,g_1g_2)\]
and is denoted $G[H]$.
\end{definition} 
\begin{definition} If $G$ acts on $A$, then $[G]^m$ acts on $A^m$ (the cartesian product) for all $m\geq1$. Therefore, we may define the \emph{$n$-th iterated wreath power of $G$} inductively: $[G]^1=G$ and $[G]^n=[G]^{n-1}[G]$.
\end{definition} 
\begin{definition}
Since $G[H]\rightarrow G$ via projection onto the second coordinate, we have a system of maps $[G]^n\rightarrow[G]^{n-1}$ allowing us to define an inverse limit. In the special case when $G=C_d$ is the cyclic group of order $d$ (acting on itself by translation), we define
\[W(d):=\lim_{\longleftarrow}\,[C_d]^n.\]
to be the infinite iterated wreath product of $C_d$; for more on $W(d)$, see \cite{wreath}.   
\end{definition}
Our primary interest in wreath products comes from their relationship to the Galois groups of compositions of rational functions. We restate the following result from \cite[Lemma 2.5]{Juul}.
\begin{lemma}{\label{lemma:borrowed}} Let $K$ be a field and let $\psi,\gamma\in K[x]$ with $\deg(\psi)=\ell$ and $\deg(\gamma)=d$. We assume that $\psi\circ\gamma$ has $\ell d$ distinct roots in $\widebar{K}$ and that $\psi$ is irreducible over $K$. Let $\alpha_1,\dots,\alpha_\ell$ be the roots of $\psi$, $M_i$ be the splitting field of $\gamma(x)-\alpha_i$ over $K(\alpha_i)$, and $G=\Gal_K(\psi)$. If $H=\Gal(M_i/K(\alpha_i))$, then there is an embedding $\Gal_K(\psi\circ\gamma)\leq G[H]$.         
\end{lemma}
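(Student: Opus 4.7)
The plan is to realize $\Gal_K(\psi\circ\gamma)$ as the Galois group of a natural tower and then read off the wreath product structure from the permutation action on the fibers of $\gamma$ over the roots of $\psi$. First, I would let $L$ denote the splitting field of $\psi\circ\gamma$ over $K$ and let $F=K(\alpha_1,\dots,\alpha_\ell)$ denote the splitting field of $\psi$, so that $L\supseteq F\supseteq K$ with $F/K$ Galois and restriction gives a surjection $\pi\colon\Gal(L/K)\twoheadrightarrow G=\Gal(F/K)$. The hypothesis that $\psi\circ\gamma$ has $\ell d$ distinct roots lets me partition these roots as $\bigsqcup_{i=1}^{\ell}R_i$, where $R_i=\{\beta\in\widebar{K}:\gamma(\beta)=\alpha_i\}$ has cardinality $d$. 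Since $\gamma\in K[x]$, every $\sigma\in\Gal(L/K)$ commutes with $\gamma$, so $\sigma(R_i)=R_{\pi(\sigma)(i)}$; together with $M_i=K(\alpha_i)(R_i)\subseteq L$, this gives $\sigma(M_i)=M_{\pi(\sigma)(i)}$.

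Next, because the $\alpha_i$ are $G$-conjugate and $\psi$ is irreducible, for each $i$ there is a $K$-isomorphism $\tau_i\colon K(\alpha_i)\to K(\alpha_1)$ sending $\alpha_i\mapsto\alpha_1$, and since $M_i$ and $M_1$ are the splitting fields of the corresponding polynomials $\gamma(x)-\alpha_i$ and $\gamma(x)-\alpha_1$, one can extend $\tau_i$ to a $K$-isomorphism $\tilde\tau_i\colon M_i\to M_1$ (with $\tilde\tau_1=\mathrm{id}$). With these identifications I would define
\[
\Phi\colon\Gal(L/K)\longrightarrow G[H],\qquad \Phi(\sigma)=(g_\sigma,\,i_\sigma),
\]
where $g_\sigma:=\pi(\sigma)\in G$ and $i_\sigma(j):=\tilde\tau_{g_\sigma(j)}\circ\sigma|_{M_j}\circ\tilde\tau_j^{-1}$. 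A direct check shows that $i_\sigma(j)$ fixes $\alpha_1$ — it is sent $\alpha_1\mapsto\alpha_j\mapsto\alpha_{g_\sigma(j)}\mapsto\alpha_1$ — so $i_\sigma(j)\in\Gal(M_1/K(\alpha_1))=H$, and hence $i_\sigma\in\operatorname{Map}(\{1,\dots,\ell\},H)$.

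The two points left to verify are that $\Phi$ is a group homomorphism and that it is injective. For the homomorphism property, writing out $i_{\sigma\tau}(j)=\tilde\tau_{g_\sigma g_\tau(j)}\circ\sigma\circ\tau\circ\tilde\tau_j^{-1}$ and inserting the identity $\tilde\tau_{g_\tau(j)}^{-1}\circ\tilde\tau_{g_\tau(j)}$ between $\sigma$ and $\tau$ yields $i_{\sigma\tau}(j)=i_\sigma(g_\tau(j))\cdot i_\tau(j)$, which is exactly the wreath product formula $(g_\sigma,i_\sigma)*(g_\tau,i_\tau)=(g_\sigma g_\tau,\,g_\tau(i_\sigma)+i_\tau)$ from the paper. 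For injectivity, if $\Phi(\sigma)$ is trivial, then $g_\sigma=1$ forces $\sigma|_F=\mathrm{id}$, while $i_\sigma(j)=1$ for every $j$ forces $\sigma|_{M_j}=\mathrm{id}$; since $L$ is the compositum $F\cdot M_1\cdots M_\ell$, this gives $\sigma=\mathrm{id}$.

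The step I expect to require the most care is matching the twist by $\tilde\tau_{g_\sigma(j)}$ with the specific convention $g(i)(a)=i(g(a))$ used in the paper's definition of $G[H]$; an off-by-one in the indexing would land $\Phi$ in the opposite group rather than $G[H]$ itself. Once this bookkeeping is straightened out, no additional input (such as linear disjointness of the $M_i$ over $F$, which would be needed for surjectivity) is required, since the statement only asserts the existence of an embedding.
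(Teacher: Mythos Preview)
Your argument is correct and is the standard way to establish this embedding: set up the tower $K\subseteq F\subseteq L$, use the fiber decomposition $R_i=\gamma^{-1}(\alpha_i)$, transport everything to $M_1$ via chosen isomorphisms $\tilde\tau_i$, and verify the cocycle identity $i_{\sigma\tau}(j)=i_\sigma(g_\tau(j))\cdot i_\tau(j)$ against the paper's wreath-product law. The injectivity step via $L=M_1\cdots M_\ell$ is also fine, and your caveat about matching the indexing convention $g(i)(a)=i(g(a))$ is exactly the right thing to flag.

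There is nothing to compare your proof against, however: the paper does not prove this lemma at all. It is stated as a quotation of \cite[Lemma~2.5]{Juul} (``We restate the following result from \cite[Lemma 2.5]{Juul}'') and is used as a black box in the proof of Theorem~\ref{thm:unicrit}. So your write-up would in fact supply a proof where the paper gives none. One small point worth noting for consistency with the paper's conventions: the definition of $G[H]$ given just before the lemma assumes $H$ is abelian, whereas the lemma as stated places no such hypothesis on $\Gal(M_i/K(\alpha_i))$; your construction of $\Phi$ works regardless, and in the paper's sole application ($\gamma(x)=x^d+f$, so $H\cong C_d$) the group $H$ is abelian anyway.
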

As in the introduction, for $\phi\in K(x)$ and $n\geq1$, we let $K_n(\phi)$ be the field obtained by adjoining all solutions of $\phi^n(x)=0$ to $K$. Since $\phi$ has coefficients in $K$, the extension $K_n(\phi)/K$ is Galois, and we let $G_{K,n}(\phi):=\Gal_K(\phi^n)$ be the Galois group of $K_n(\phi)/K$. Since $K_{n-1}(\phi)\subseteq K_{n}(\phi)$ for all $n$ (with some separability assumptions), we may define $G_K(\phi)$ to be the inverse limit of the $G_{K,n}(\phi)$. Eschewing the generic situation for reasons of complexity, we focus our attention on the family of iterates of $\phi(x)=x^d+f$ for $f\in \mathbb{F}_p[t]$. We now restate and prove a Serre-type finite index Theorem for $G_K(\phi)$ from the introduction; this is the first (unconditional) result of this form over any global field in the non-quadratic case:               
\unicrit*
\begin{proof} To prove the first statement, it suffice to show that $\phi$ is dynamically $2$-power non-isotrivial; see Definition \ref{def:isotriv} and Theorem \ref{thm:Avg} above. When $d=2$, it suffice to calculate the $j$-invariant of the elliptic curve $E_\phi$ from (\ref{elliptic}) above. In particular, we compute that
\[\frac{1}{64}j(E_\phi)=\frac{-f^3 + 9f^2 - 27f + 27}{f^2+2f+1},\]
which cannot be constant unless $f$ is constant. Therefore, we may assume that $d\geq3$. If $d$ is odd, then $\{\frac{x^idx}{y}\}$ for $0\leq i\leq\frac{d^2-1}{2}-1$ is a basis for the space of regular $1$-forms \cite[Theorem 3]{differentials} on
\[C_{2,2}(\phi): Y^2=\phi^2(X)=(X^d+f)^d+f.\] 
To see that $C_{2,2}(\phi)$ is non-singular, use the discriminant formula in \cite[Lemma 2.6]{RJones}; there we see that the discriminant of $\phi^2$ is zero if an only if $\phi(0)\cdot\phi^2(0)=f\cdot(f^d+f)=0$. This is impossible, since $f$ is non-constant. 
 
We now calculate the Kodaira-Spencer map associated to the surface $C_{2,2}(\phi)\rightarrow\mathbb{P}^1$ using \cite{Voloch}. In keeping with the notation in \cite[\S5.2]{Voloch}, let $\{P_s\}$ for $1\leq s\leq d^2$ be the set of roots of $\phi^2$, and let $\phi_x^2$ and $\phi_t^2$ denote the partial derivatives of $\phi^2$ with respect to $x$ and $t$ respectively. It follows from Serre duality that the Kodaira-Spencer matrix with respect to the standard basis above is  \begin{equation}{\label{Matrix}}
m_{i,j}=\sum_s \frac{P_s^{i+j}\phi_t^2(P_s)}{2\phi_x^2(P_s)^2}; 
\end{equation}
see \cite[\S5.2]{Voloch}. One computes that $\phi_x^2(x)=d^2\cdot(\phi(x)\cdot x)^{d-1}$ and $\phi_t^2(x)=f'\cdot(d\cdot(\phi(x))^{d-1}+1)$. On the other hand, since $0=\phi^2(P_s)=\phi(\phi(P_s))$, we may write $\phi(P_s)=\zeta_s\cdot\alpha_f$ where $\alpha_f:=\sqrt[d]{-f}$ is a fixed $d$-th root of $-f$ in $\widebar{K}$ and $\zeta_s$ is a $d$-th root of unity. In particular, it follows from (\ref{Matrix}) that
\begin{equation}{\label{MatrixSimp}}
m_{i,j}=\bigg(\frac{f'}{2 d^4 f^2}\bigg)\cdot\sum_s P_s^{i+j-2d+2}\cdot\Big(\zeta_s^2\alpha_f^2-d\cdot\alpha_f\cdot f\cdot \zeta_s\Big).
\end{equation}
To show that the Kodaira-Spencer map is non-zero, it suffices to find a single entry $m_{i,j}\neq0$. To do this, let $i=d-2$ and $j=0$. In this case, we see from (\ref{MatrixSimp}) that
\[m_{d-2,0}=\bigg(\frac{f'}{2 d^4 f^2}\bigg)\cdot\sum_s\, \frac{1}{\zeta_s\alpha_f-f}\cdot\Big(\zeta_s^2\alpha_f^2-d\cdot\alpha_f\cdot f\cdot \zeta_s\Big)\] 
On the other hand, the formal identity $x^n-y^n=(x-y)\cdot(x^{n-1}+yx^{n-2}+\dots +y^{n-2}x+y^{n-1})$ applied to $n=d$, $x=\zeta_s\alpha_f$ and $y=f$ implies that
\[m_{d-2,0}=\!\bigg(\frac{f'}{2 d^4 f^2}\bigg)\,\sum_s\, \bigg(\!\bigg(\frac{-\alpha_f^{d-1}}{f^{d}+f}\bigg)\zeta_s^{d-1}+\bigg(\frac{-\alpha_f^{d-2}\cdot f}{f^{d}+f}\bigg)\zeta_s^{d-2}+\dots+\bigg(\frac{-f^{d-1}}{f^{d}+f}\bigg)\!\bigg)\cdot\Big(\zeta_s^2\alpha_f^2-d\cdot\alpha_f\cdot f\cdot \zeta_s\Big).\]
After regrouping terms and changing the order of summation, we see that
\[m_{d-2,0}=\bigg(\frac{f'}{2 d^4 f^2}\bigg)\!\cdot\!\bigg(\frac{(1-d)\cdot f}{f^{d-1}+1}+\sum_{k=1}^{d-1}\sum_sc_k\,\zeta_s^k \bigg)\]
for some constants $c_k$, depending only on $1\leq k\leq d-1$ (not on $s$). However, because the function sending $P_s\rightarrow \zeta_s$ is a $d:1$ surjection onto $\mu_d$ (the group of $d$-th roots of unity), the sum $\sum_sc_k\zeta^k=(d^2c_k)\sum_{\zeta\in\mu_d}\zeta^k=0$ for all indices $k$. We deduce that
\begin{equation}{\label{matrixformula}}
m_{d-2,0}= \frac{(1-d)\cdot f'}{2d^4(f^d+f)}.
\end{equation}
In particular, $m_{d-2,0}\neq0$ since $f\not\in K^p$ and $d\not\equiv1\Mod{p}$. It follows that $C_{2,2}(\phi)$ is a non-isotrivial curve of genus at least two, and Theorem \ref{thm:Avg} implies that $\mathcal{Z}(\phi,S)$ is finite and $\widebar{\Avg}(\mathcal{Z}(\phi),S)=0$\, for all finite subsets $S\subseteq V_K$ as claimed.
 
On the other hand, essentially the same argument shows that $C_{d,2}: Y^d=\phi^2(x)$ is a non-isotrivial curve of genus at least two, and we take this approach to prove Theorem \ref{thm:unicrit} part (1) when $d\geq4$ is even. To see that $C_{d,2}$ is non-isotrivial, we use the differentials $\frac{x^{d-2}dx}{y^{d-1}}$ and $\frac{dx}{y}$, which are both holomorphic by \cite[Theorem 3]{differentials}, to compute a non-zero entry of the Kodaira-Spencer matrix. In particular, the same Serre duality argument in \cite[\S5.2]{Voloch} implies that
\begin{equation}{\label{KSford}}
m_{(d-2,d-1),(0,1)}=\sum_s\Res_{P_s}\bigg(\frac{x^{d-2}\,\phi_t^2\, dx}{y^d\,\phi_x^2}\bigg)dt=\sum_s\,\frac{{P_s}^{d-2}\,\phi_t^2(P_s)\, dx}{d\cdot\phi_x^2(P_s)^2}dt,
\end{equation}
since $y^d=\phi^2(x)$, so that $\phi^2(x)/(x-P_s)|_{P_s}=\phi_x^2(P_s)$ and $(x-P_s)$ is of degree $d$; here $\Res$ denotes the residue map on the differentials of a curve \cite[Theorem 7.14.1]{Hart}. Hence, $m_{(d-2,d-1),(0,1)}$ is nothing but $2/d\cdot m_{d-2,0}$ from the hyperelliptic case on (\ref{Matrix}). We deduce that
\[m_{(d-2,d-1),(0,1)}=\frac{(1-d)\,f'}{d^5\,(f^d+f)}\neq0,\]
which completes the proof of the first statement.
 
Now for the proof of part (2). In what follows, we view $\phi$ over $K(\mu_d)$. If $d$ is a prime and $f\not\in {K(\mu_d)}^{d}$, then we first show that $\phi^n$ is irreducible over $K(\mu_d)$ for all $n\geq1$. To see this, we must rule out the presence of $d$-powers in the orbit of zero. Suppose that $\phi^n(0)\in {K(\mu_d)}^d$ for some $n\geq2$. Since, $\phi^n(0)=(((f^d+f)^d+f)^d\dots +f)^d+f$ and $\mathcal{O}_{K(\mu_d)}$ is a UFD, we may write $\phi^n(0)=f\cdot g_n$ for some $g_n\in\mathcal{O}_{K(\mu_d)}$ coprime to $f$. It follows that $f$ and $g_n$ must both be $d$-powers, a contradiction. Hence, $\phi^n(0)\not\in {K(\mu_d)}^d$ for all $n\geq1$, and \cite[Theorem 8]{xdc} implies that all iterates of $\phi$ are irreducible over $K(\mu_d)$. 
 
As for the Galois groups of iterates of $\phi$, note that Lemma \ref{lemma:borrowed} (applied inductively to $\psi=\phi$ and $\gamma=\phi$) implies that $G_{K(\mu_d)}(\phi)\leq W(d)$. On the other hand, we see that the proofs of Theorem \ref{thm:Avg} and Theorem \ref{thm:unicrit} part (1) imply that the $d$-free part of all but finitely many terms of the orbit $\mathcal{O}_{\phi}(b)$ contains primitive prime divisors whenever $\hat{h}_\phi(b)\neq0$. In particular, since $\deg(\phi^n(0))=d^n\cdot\deg(f)$ goes to infinity, this holds for $b=0$. Hence, if $K_m(\phi)$ is a splitting field of $\phi^m$ over $K(\mu_d)$, then $\Gal(K_n(\phi)/K_{n-1}(\phi))\cong(\mathbb{Z}/d\mathbb{Z})^{d^{n}}$ for all but finitely many $n$; see \cite[Theorem 25]{xdc}. Therefore, $G_{K(\mu_d)}(\phi)\leq W(d)$ is a finite index subgroup as claimed.                           
\end{proof}
\begin{remark}{\label{rem:eventuallystable}} It follows from Theorem \ref{thm:unicrit} that $G_{K(\mu_d)}(\phi_f)\leq W(d)$ is a finite index subgroup for all non-constant $f\in K$. To see this, apply Theorem \ref{thm:unicrit} to the field $K_0=\mathbb{F}_p(f)\cong\mathbb{F}_p(t)=K$ and the polynomial $\phi(x)=x^2+t$ and then use the fact that $[\mathbb{F}_p(t):\mathbb{F}_p(f)]= h(f)$ to get the index bound
\[\big[W(d):G_{K(\mu_d)}(\phi_f)\big]\leq \big[W(d):G_{K(\mu_d)}(x^d+t)\big]\cdot h(f). \] 
In particular, the number of irreducible factors of $\phi_f^n$ over $K(\mu_d)$ (and hence over $K$) is bounded independently of $n$ (cf. \cite[Corollary 7]{xdc}).  
\end{remark}
\begin{remark}{\label{eg:isotriv}} We note that $C_{\ell,1}(\phi): Y^\ell=X^d+f$ is isotrivial for all $\ell\geq2$: the map $(X,Y)\rightarrow\Big(\frac{X}{\sqrt[d]{f}}\,,\frac{Y}{\sqrt[\ell]{f}}\Big)$ is an isomorphism (defined over $\widebar{K}$) onto the curve $Y^\ell=X^d+1$. Alternatively, one can compute the Kodaira-Specer map for the surface $C_{\ell,m}(\phi)$. For instance, when $\ell=2$, it follows from \cite[\S5.2]{Voloch} that
\[m_{i,j}=\frac{f'\sqrt[d]{-f}}{2d^2}\cdot\sum_{\zeta\in\mu_d}\zeta^{(y_{ij})}, \,\;\;\text{for some}\; -2d+2\leq y_{ij}\leq -d-1, \,\;y_{ij}\in\mathbb{Z}.\] 
In particular, we see that $y_{ij}\not\equiv{0}\Mod{d}$, from which it follows that $m_{ij}=0$ for all indices $0\leq i,j\leq \frac{d-1}{2}-1$; here again we use the standard basis $\{\frac{x^idx}{y}\}$ of regular differentials on a hyperelliptic curve. In either case, the examples $\phi(x)=x^d+f$ underscore the importance of passing to an iterate (and its corresponding superelliptic curve) to study primitive prime divisors.
\end{remark}
\begin{remark} Although it was enough to show that certain $KS$ maps were non-zero to prove Theorems \ref{thm:Avg} and \ref{thm:unicrit}, we believe that the $KS$ maps for the curves $C_{2,2}(\phi): Y^2=(x^d+f)^d+f$ and $C_{d,2}(\phi): Y^d=(x^d+f)^d+f$ have maximal rank, from which it follows that the best possible height bounds (involving a main term of $2+\epsilon$) hold in these families; see \cite{Kim}. If such a result were true, then one could give relatively small bounds for the size of the elements of $\mathcal{Z}(\phi,S)$. In practice, one exploits the fact the $KS$ matrix is symmetric (\ref{MatrixSimp}) and the fact that the cyclotomic sums $\sum\zeta_s^m$ and power sums $\sum P^{m}$ vanish (a trace-zero fact), to prove that the $KS$ map has maximal rank.   
\end{remark}   
As for characteristic zero function fields, we can make the index bounds explicit and uniform when $K=k(t)$ is a rational function field; compare to uniform bounds in the quadratic case \cite{GaloisUniform}. Here we use work of Schmidt \cite{schmidt} and Mason \cite{Mason} on Thue Equations over function fields; conveniently, we need not worry about isotriviality, since it does not affect the height bounds in this setting.     
\uniform*
\begin{proof} We work over the ground field $K(\mu_d)$. We have already seen that $f\notin K(\mu_d)^d$ implies all iterates of $\phi(x)$ are irreducible over $K(\mu_d)$; see the proof of Theorem \ref{thm:unicrit} above. We show that for all $n\geq11$, there is a place $v_n\in V_{K(\mu_d)}$ such that $v_n(\phi^n(0))>0$,\, $v(\phi^n(0))\not\equiv0\Mod{d}$ and $v(\phi^m(0))=0$ for all $1\leq m\leq n-1$.
 
If there is no such place for some $n\geq2$, then $\phi^n(0)=d_n\cdot y_n^d$ for some ($d$-power free) $d_n$ satisfying
\[ \deg(d_n)\leq (d-1)\sum_{i=1}^{\lfloor\frac{n}{2}\rfloor} \deg(\phi^i(0))=(d-1)\cdot\sum_{i=1}^{\lfloor\frac{n}{2}\rfloor} \deg(f)\cdot d^{i-1}=\deg(f)\cdot(d^{\lfloor\frac{n}{2}\rfloor+1}-1);\]
see (\ref{refinement}) in the proof of Theorem \ref{thm:Avg}.
Hence the curve $C_\phi^{(d_n)}: Y^d=d_n^{d-1}\cdot(X^d+f)$ has an integral point $(\phi^{n-1}(0), d_n\cdot y_n)$. Let $K_1$ be a splitting field of $\phi$ over $K(\mu_d)$, let $\mathfrak{g}_{K_1}$ be the genus of $K_1$, and let $\mathfrak{r}_{K_1}$ be the number of infinite places of $K_1$. Then It follows from \cite[Theorem 15]{Mason} that
\[h_{K_1}(\phi^{n-1}(0))\leq 18h_{K_1}(C_\phi^{(d_n)})+ 6\mathfrak{g}_{K_1}+ 3\mathfrak{r}_{K_1}-3;\]
here $h_{K_1}(C_\phi)$ is the maximum height (relative to $K_1$) of the coefficients defining $C_\phi^{(d_n)}$. However, \cite[2.11]{schmidt} implies that $h_{K_1}(\alpha)=d\cdot\deg(\alpha)$ for all $\alpha\in K(\mu_d)$, and \cite[Lemma H]{schmidt} implies that $\mathfrak{g}_{K_1}\leq(d-1)(\deg(f)-1)$. In particular, we see that
\[d\cdot d^{n-2}\cdot\deg(f)\leq18\cdot d\cdot\deg(f)\cdot d^{\lfloor\frac{n}{2}\rfloor+1}+6(d-1)(\deg(f)-1)+3d-3.\]
Therefore,
\[d^{n-1}\leq 18d^{\lfloor\frac{n}{2}\rfloor+2}+(3d-3)\frac{2\deg(f)-1}{\deg(f)}\leq 18d^{\lfloor\frac{n}{2}\rfloor+2}+6d-6.\]
We deduce that $n\leq 2\log_d(19)+5<10.4$ since $d\geq3, n\geq2$; statements (2) and (3) then follow from \cite[Theorem 2.5]{xdc}.
 
As for the index bound in statement (1), let $K_n(\phi)$ be the splitting field of $\phi^n$ over $K(\mu_d)$. One computes inductively via \cite[\S3.3 Theorem 19]{DF} that $[C_d]^n$ is a group of order $d^{\frac{d^n-1}{d-1}}$ for all $n\geq1$. On the other hand, since the subextensions $K_n(\phi)/K_{n-1}(\phi)$ are Kummer extensions of degree $d^{d^{n-1}}$ for all $n\geq11$ by \cite[Theorem 2.5]{xdc} and the first part of our proof, we have the index bound:
\[\log_d\big[\;[C_d]^n: \Gal_{K(\mu_d)}(\phi^n)\big]=\log_d\frac{d^{\frac{d^n-1}{d-1}}}{[K_{10}(\phi):K(\mu_d)]\cdot \prod_{j=10}^{n-1}[K_{j+1}(\phi):K_j(\phi)]}.\]
However, $[K_{10}(\phi):K(\mu_d)]\geq d^{10}$, since $\phi^{10}$ is an irreducible polynomial, and we deduce that    
\[\log_d\big[\;[C_d]^n: \Gal_{K(\mu_d)}(\phi^n)\big]\leq \frac{d^n-1}{d-1}-(d^{n-1}+d^{n-2}+\dots d^{10}+10)\leq \frac{d^{10}-1}{d-1}+10.\]
 
Finally, we consider the special case $\phi(x)=x^d+t$ for all $d\geq2$ (not necessarily prime). We note that the discriminant of $\phi^n(0)$ (as an element of $k[t]$) is non-zero. In particular, $\phi^n(0)$ is square-free for all $n$. To see this, let $p|d$ be a prime. Then the quotient map $\mathbb{Z}\rightarrow\mathbb{Z}/p\mathbb{Z}$ induces a ring homomorphism $\mathbb{Z}[t]\rightarrow(\mathbb{Z}/p\mathbb{Z})[t]$ given by reducing coefficients. Therefore, if $\phi^n(0)$ is not square-free in $\widebar{\mathbb{Q}}[t]$ (hence not square-free in $\mathbb{Z}[t]$), then the image of $\phi^n(0)\in(\mathbb{Z}/p\mathbb{Z})[t]$ is not square-free. Hence, \cite[\S13.5 Prop. 33]{DF} implies that $\phi^n(0)$ and its formal derivative in $(\mathbb{Z}/p\mathbb{Z})[t]$ must share a root. However. one sees that the formal derivative of $\phi^n(0)$ is $1$ for all $n\geq2$ by the power-rule. We deduce that $\phi^n(0)$ is square-free in $k[t]$ for all $n\ge1$. It terms of valuations, this means that $v_{\mathfrak{p}}(\phi^n(0))=1$ for all $\mathfrak{p}\in V_K$ such that $v_{\mathfrak{p}}(\phi^n(0))>0$. On the other hand, a simple degree computation shows
\[\deg(\phi^n(0))=d^{n-1}>\frac{d^{n-1}-1}{d-1}=d^{n-2}+d^{n-3}+\dots +1=\deg(\phi^{n-1}(0))+\dots +\deg(\phi(0)).\]
Therefore, it is impossible that all prime factors of $\phi^n(0)$ come from lower order iterates. Moreover, since $\phi^n(0)$ is square-free, it follows from \cite[Theorem 8]{xdc} that all iterates of $\phi$ are irreducible over $K(\mu_d)$. We deduce from \cite[Theorem 25]{xdc} that $G_{K(\mu_d)}(\phi)\cong W(d)$ as claimed. The statement about specializations follows from Hilbert's irreducibility theorem \cite[Theorem 1, Theorem 3.4.1]{Serre-Galois}           
\end{proof} 
\begin{remark} The proof of Theorem \ref{thm:uniform} implies that
\[\max\big\{\,n\;\big|\; n\in\mathcal{Z}(x^d+f,0,d)\;\text{for some}\; f\in k[t],\;\deg(f)\geq1, \; d\geq2\,\big\}\leq10 ;\]
here $\mathcal{Z}(x^d+f,0,d)$ is the $d$-th Zsigmondy set \cite[Definition 2]{primdiv}, representing terms which do not have $d$-power free primitive prime divisors. Equivalently, after the $10$th stage of iteration, we always see $d$-power free primitive prime in the orbit of zero (independent of both $f\in k[t]$ and $d$) in characteristic zero (c.f \cite[Theorem 1.1]{Krieger}).  
\end{remark}
\begin{remark} It is tempting to think that the discriminant trick we used to prove $G_{K(\mu_d)}(\phi)\cong W(d)$ for $\phi(x)=x^d+t$ works for all $\phi(x)=x^d+f$ satisfying $f\not\in K(\mu_d)^d$. However, surjectivity already fails for $d=2$: when $\phi(x)=x^2-(t^2+1)$, we show in \cite{GaloisUniform} that $[W(2):G_{K}(\phi)]=2$, even though $-(t^2+1)$ is never a square in characteristic zero; this example is essentially due to Stoll \cite{Stoll-Galois}. Likewise, this discriminant trick does not work in prime characteristic: when $d=5$, $p=43$ and $\phi(x)=x^5+t$, the discriminant of $\phi^6(0)$ is zero in $\mathbb{F}_p[t]$. Nevertheless, a finite index statement (not necessarily surjective) holds by Theorem \ref{thm:unicrit}.  
\end{remark} 
\begin{remark} To the author's knowledge, there is not a single pair $(d,c)$ of values $d\geq3$ and $c\in\mathbb{Q}$ for which $\Gal_{\mathbb{Q}(\mu_d)}(\phi_c^n)$ is known for large $n$. Therefore, Theorem \ref{thm:uniform} represents some progress and solves the inverse Galois problem for $[C_d]^n$. 
\end{remark}
\begin{remark} To prove the surjectivity of the $\ell$-adic Galois representation attached to an elliptic curve, it suffices to prove the surjectivity onto some finite quotient. Namely, if $G\leq\GL_2(\mathbb{Z}_\ell)$ is a closed subgroup that surjects onto $\GL_2(\mathbb{Z}/\ell^n\mathbb{Z})$ for some small $n$, then $G$ must be equal to $\GL_2(\mathbb{Z}_\ell)$; see, for instance, \cite{surj}. In particular, this is a fact from group theory. On the other hand, such a property will fail in general for closed subgroups of $W(d)$. Nevertheless, we have proven this rigidity for subgroups $G_K(\phi)\leq W(d)$ coming from dynamics in Theorem \ref{thm:uniform}.      
\end{remark}  
\end{section}

\vspace{12mm}
\indent\indent Wade Hindes, Department of Mathematics, CUNY Graduate Center, 365 Fifth Avenue, New York, New York 10016-4309.\\
\indent \emph{E-mail address:} \textbf{whindes@gc.cuny.edu}
\end{document}